\documentclass[11pt]{article}

\usepackage{amsfonts,amsmath}
\usepackage{amssymb}

\newtheorem{theorem}{Theorem}[section]
\newtheorem{lemma}{Lemma}[section]
\newtheorem{proposition}[theorem]{Proposition}

\newtheorem{definition}{Definition}[section]
\newtheorem{preexample}{Example}[section]
\newenvironment{example}{\begin{preexample}}{\end{preexample}}
\newtheorem{preremark}{Remark}
\newenvironment{remark}{\begin{preremark}\rm}{\end{preremark}}



\newcommand{\qed }{ \hfill $\Box$ }





\begin{document}

\begin{center}
{\Large Stochastic characterization of harmonic sections and a Liouville theorem\\}

\end{center}

\vspace{0.3cm}

\begin{center}
{\large  Sim\~ao Stelmastchuk \footnote{The research of  S. Stelmastchuk is partially supported by FAPESP 02/12154-8.}
}  \\

\textit{Departamento de  Matem\'{a}tica, Universidade Estadual de Campinas,\\ 13.081-970 -  Campinas - SP, Brazil.
e-mail: simnaos@gmail.com}
\end{center}

\vspace{0.3cm}

\begin{abstract}
Let $P(M,G)$ be a principal fiber bundle and $E(M,N,G,P)$ be an associate fiber bundle. Our interested is to study harmonic sections of the projection $\pi_{E}$ of $E$ into $M$. Our first purpose is to give a stochastic characterization of harmonic section from $M$ into $E$ and a geometric characterization of harmonic sections with respect to its equivariant lift. The second purpose is to show a version of Liouville theorem for harmonic sections and to prove that section $M$ into $E$ is a harmonic section if and only if it is parallel.
\end{abstract}

\noindent {\bf Key words:} harmonic sections; Liouville theorem; stochastic analisys on manifolds

\vspace{0.3cm} \noindent {\bf MSC2010 subject classification:} 53C43, 55R10, 58E20, 58J65, 60H30

\section{Introduction}

Let $\pi_{E}:(E,k) \rightarrow (M,g)$ be a Riemannian submmersion and $\sigma$ be a section of $\pi_{E}$, that is,
$\pi_{E} \circ M = Id_{M}$. We know that $TE= VE \oplus HE$ such that $VE=\rm{ker}(\pi_{E*})$ and $HE$ is the
horizontal bundle ortogonal to $VE$. C. Wood has studied the harmonic sections in many context, see \cite{wood1},
\cite{wood2}, \cite{wood3}, \cite{wood4}, \cite{wood5}. To recall, a harmonic sections is a minimal section for the
vertical energy functional
\[
 E(\sigma) = \frac{1}{2}\int_{M} \| \mathbf{v}\sigma_{*} \|^{2}vol(g),
\]
where $\mathbf{v}\sigma_{*}$ is the vertical component of $\sigma_*$. Furthermore, in \cite{wood1}, Wood showed that
$\sigma$ is a minimizer of the vertical energy functional if
\[
 \tau^{v}_{\sigma} = \rm{tr}\nabla^{v}\mathbf{v}\sigma_{*} = 0,
\]
where $\nabla^{v}$ is the vertical part of Levi-Civita connection on $E$, since $\pi_{E}$ has totally geodesics fibers.
Wood called $\sigma$ a harmonic section if $\tau_{\sigma}^{v} = 0$.

In this work, we drop the Riemannian submersion condition of $\pi_{E}$ and we mantain the fact that $TE = VE \oplus HE$
and that $M$ is a Riemmanian Manifold. Let $\nabla^{E}$ be a symmetric connection on $E$, where $E$ is not necessarily
a Riemannian manifold. About these conditions we can define harmonic sections in the same way that Wood, only observing
that $\nabla^{v}$ is vertical connection induced by $\nabla^{E}$. There is no compatibility between $\nabla^{E}$ and
Levi-Civita connection on $M$. The only assumption about $\nabla^{E}$ is that $\pi_{E}$ has totally geodesic fibres.

Furthermore, we restrict the context of our study. Let $P(M,G)$ be a Riemannian $G$-principal fiber bundle over a
Riemannian manifold $M$ such that the projection $\pi$ of $P$ into $M$ is Riemmanian submmersion. Suppose that $P$ has
a connection form $\omega$. Let $E(M,N,G,P)$ be an associated fiber bundle of $P$ with fiber $N$. It is well know that
$\omega$ yields horizontal spaces on $E$. Our goal is to study the harmonic sections of projection $\pi_{E}$.

Let $F:P \rightarrow N$ be a differential map. We call $F$ a horizontally harmonic map if $\tau_{F} \circ (H \otimes H) = 0$, where $H$ is the horizontal lift from $M$ into $P$ associated to $\omega$.

Let $\sigma$ be a section of $\pi_{E}$. It is well know that there exists a unique equivariant lift $F_{\sigma}: P \rightarrow N$ associated to $\sigma$. Our first purpose is to give an stochastic characterization for the harmonic section $\sigma$ and the horizontally harmonic map $F_{\sigma}$. From these stochastic characterizations we show that a section $\sigma$ of $\pi_{E}$ is harmonic section if and only if $F_{\sigma}$ is a horizontally harmonic map. This result is an extension of Theorem 1 in \cite{wood1}. As an example of application of stochastic characterization of harmonic sections, we will characterize the harmonic sections on tangent bundle with complete and horizontal lifts.

For our second purpose we consider $P(M,G)$ endowed with the Kaluza-Klein metric, $M$ and $G$ with the  Brownian coupling property and $N$ with the non-confluence property. About these conditions we show a version of Liouville Theorem and a version of result due to T. Ishiara in \cite{ishihara} to harmonic sections. As applications of our Liouville Theorem we can show the following. If we suppose that $M$ is complete Riemmanian manifold with nonnegative Ricci curvature and its tangent bundle $TM$ is endowed with the Sasaky metric, then the harmonic sections $\sigma$ of $\pi_{TM}$ are the $0$-section. In the same way we can construct an ambient for Hopf fibrations, with Riemannian structure, such that harmonic sections are the 0-section.
%
%

\section{Preliminaries}

In this work we use freely the concepts and notations of  P. Protter \cite{Protter}, E. Hsu \cite{hsu}, P. Meyer
\cite{Mey3}, M. Emery \cite{emery1} and \cite{emery3}, W. Kendall \cite{Kendall2} and S. Kobayashi and N. Nomizu
\cite{kobay}. We refer the reader to \cite{catuogno3} for a complete survey about the objects of this section.

Let $(\Omega, \mathcal{F},(\mathcal{F}_{t})_{t\geq0}, \mathbb{P})$ be a probability space which satisfies the usual
hypothesis (see for example \cite{emery1}). Our basic assumptions is that every stochastic process are continuos.

\begin{definition}
Let $M$ be a differential manifold. Let $X$ be a process stochastic with valued in $M$. We call $X$ a semimartingale
if, for all $f$ smooth on $M$, $f(X)$ is a real semimartingale.
\end{definition}

Let $M$ be a differential manifold endowed whit symmetric connection $\nabla^{M}$. Let $X$ be a semimartingale in $M$ and $\theta$ be a 1-form on $M$ defined along $X$. Let $(x_{1},\ldots, x_{n})$ be a local coordinate system on $M$. We define the It\^o integral of $\theta$ along $X$, locally, by
\[
\int_0^t\theta d^{\nabla^{M}} X_{s} = \int_{0}^{t} \theta_{i}(X_{s})dX^{i}_{s} + \frac{1}{2}\int_{0}^{t} \Gamma_{jk}^{i}(X_{s})\theta_{i}(X_{s})d[X^{j},X^{k}]_{s},
\]
where $\theta = \theta_{i} dx^{i}$ with $\theta_{i}$ smooth functions and $\Gamma_{jk}^{i}$ are the Cristoffel symbol of connection $\nabla^{M}$. Let $b \in T^{(2,0)}M$ defined along $X$. We define the quadratic integral on $M$ along $X$, locally, by
\[
\int_0^{t}b\;(dX,dX)_{s} = \int_{0}^{t} b_{ij}(X_{s})d[X^{i},X^{j}]_{s},
\]
where $b = b_{ij} dx^{i} \otimes dx^{j}$ with  $b_{ij}$ smooth functions. Furthermore, we denote the Stratonovich integral by $\int_{0}^{t} \theta \delta X_{s}$.

Let $M$ and $N$ be differential manifolds endowed with symmetric connections $\nabla^{M}$ and $\nabla^{N}$,
respectively.  Let $F:M\rightarrow N$ be a differential map and $\theta$ be a section of $TN^*$. We have the following
geometric It\^o formula:
\begin{equation}\label{ito-ito}
\int_0^t\theta \;d^{\nabla^{N}} F(X_{s})=\int_0^tF^*\theta \;d^{\nabla^{M}} X_{s} +\frac{1}{2}\int_0^t\beta_F^*\theta
\;(dX,dX)_{s},
\end{equation}
where $\beta_F$ is the second fundamental form of $F$ (see \cite{catuogno3} or \cite{vil}  for the definition of
$\beta_{F}$). It is well known that $F$ is affine map if $\beta_{F} \equiv 0$.

\begin{definition}
Let $M$ be a differential manifold endowed with symmetric connection $\nabla^{M}$. A semimartingale $X$ with values in
$M$ is called a $\nabla^{M}$-martingale if $\int_0^t \theta \;d^ {M} X_{s}$ is a real local martingale for all $\theta
\in \Gamma(TM^*)$.
\end{definition}

\begin{definition}
Let $M$ be a Riemannian manifold equipped with metric $g$. Let $B$ be a semimartingale with values in $M$, we say that
$B$ is a $g$-Brownian motion in $M$ if $B$ is a $\nabla^{g}$-martingale, where $\nabla^{g}$ is the Levi-Civita
connection of $g$, and for any section $b$ of $T^{(2,0)}M$ we have that
\begin{equation}\label{Brownian}
\int_0^tb(dB,dB)_{s}=\int_0^t \rm{tr}\;b_{B_s}ds.
\end{equation}
\end{definition}

From (\ref{ito-ito}) and (\ref{Brownian})  we deduce the useful formula:
\begin{equation}\label{harmonic}
\int_0^t\theta d^{\nabla^{N}} F(B_{s})=\int_0^tF^*\theta d^{\nabla^{g}} B_{s} +\frac{1}{2}\int_0^t\tau_F^*\theta_{B_s}
ds,
\end{equation}
where $\tau_F$ is the tension field of $F$.

From formula (\ref{Brownian}) and Doob-Meyer decomposition it follows that $F $ is an harmonic map if and only if it
sends $g$-Brownian motions to $\nabla^{N}$-martingales.

\begin{definition}
Let $M$ be a differential manifold endowed with symmetric connection $\nabla^{M}$. $M$ has the non-confluence of
martingales property if for every filtered space $(\Omega,\mathcal{F},(\mathcal{F}_{t})_{t \geq 0}, \mathbb{P})$,
$M$-valued martingales $X$ and  $Y$ defined over $\Omega$ and every finite stopping time  $T$ such that
\[
X_{T}=Y_{T} \ \ a.s. \textrm{ we have } X = Y \ \
\textrm{over} \ \ [0, T].
\]
\end{definition}
\begin{example}\label{Bex1}
Let $M=V$ be a $n$-dimensional vector space with flat connection $\nabla^{n}$. Let $X$ and $Y$ be $V$-valued
martingales. Suppose that there are a stopping time $\tau$ with respect to $(\mathcal{F}_{t})_{t \geq 0}$, $K>0$ such
that $\tau \leq K <\infty$ and $X_{\tau} = Y_{\tau}$. Then straightforward calculus shows that $X_{t} = Y_{t}$ for $t
\in [0, \tau]$.
\end{example}

\begin{definition}
A Riemmanian manifold $M$ has the Brownian coupling property if for all $x_{0}, y_{0} \in M$ we can construct a
complete probability space $(\Omega, \mathcal{F}, \mathbb{P})$, a filtration $({\mathcal{F}}_t; t \geq 0)$ and two
Brownian motions $X$ and $Y$, not necessarily independents, but both adapted to filtration such that
\[
X_{0} = x_{0}, Y_{0} = y_{0}
\]
and
\begin{equation*}\label{acoplagem}
\mathbb{P}(X_{t} = Y_{t} \textrm{ for some  } t \geq 0 ) = 1.
\end{equation*}
The stopping time $T(X,Y) = \inf \{ t> 0 ; X_{t} = Y_{t}\}$ is called coupling time.
\end{definition}

\begin{example}\label{Bex2}
Let $M$ be a complete Riemannian manifold. In \cite{Kendall1}, W. Kendall has showed that if $M$ is compact or $M$ has
nonnegative Ricci curvature then $M$ has the Brownian coupling property.
\end{example}

Let $M$ be a Riemmanian manifold with metric $g$. Consider $X$ and $Y$ two $g$-Brownian motion in $M$ which satisfies
the Brownian coupling property and $X_{0}=x, Y_{0}=y$, where $x,y \in M$. Denote by $T(X,Y)$ their coupling time. The
process $\bar{Y}$ is defined by
\begin{equation}\label{coalescenciageral}
\bar{Y}_{t} = \left\{
\begin{array}{rcl}
Y_{t} & , & t \leq T(X,Y)\\
X_{t} & , & t \geq T(X,Y).
\end{array}
\right.
\end{equation}
It is imediatelly that $\bar{Y}_{0}= y_{0}$.

\begin{proposition}\label{Bprop1}
Let $M$ be a Riemannian manifold with metric $g$. Suppose that $M$ has the Brownian coupling property. Let $X,Y$ be two
$g$-Brownian motions in $M$ which satisfies the Brownian coupling property. Then the process $\bar{Y}$ is a
$g$-Brownian motion in $M$.
\end{proposition}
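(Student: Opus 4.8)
The plan is to verify directly that $\bar{Y}$ satisfies the two defining conditions of a $g$-Brownian motion: that $\bar{Y}$ is a $\nabla^{g}$-martingale, and that its quadratic integral against any $b \in \Gamma(T^{(2,0)}M)$ equals $\int_0^t \mathrm{tr}\, b_{\bar{Y}_s}\, ds$. Writing $T = T(X,Y)$ for the coupling time, the starting observation is that $\bar{Y}$ is a continuous semimartingale: on $[0,T]$ it coincides with $Y$ and on $[T,\infty)$ with $X$, and by continuity of $X$ and $Y$ together with the definition $T = \inf\{t>0 : X_t = Y_t\}$ one has $X_T = Y_T$ almost surely, so no discontinuity is introduced at the splicing time. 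Since $T$ is a stopping time and both $X$ and $Y$ are adapted to the common filtration furnished by the Brownian coupling property, $\bar{Y}$ is adapted as well.

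The key structural step is the decomposition of the It\^o integral at the stopping time. For any $\theta \in \Gamma(TM^*)$, locality of the It\^o integral (and of the quadratic covariations appearing in the Christoffel correction) yields
\[
\int_0^t \theta\, d^{\nabla^{g}} \bar{Y}_s = \int_0^{t\wedge T} \theta\, d^{\nabla^{g}} Y_s + \left(\int_0^{t} \theta\, d^{\nabla^{g}} X_s - \int_0^{t\wedge T} \theta\, d^{\nabla^{g}} X_s\right).
\]
Because $Y$ is a $g$-Brownian motion, $\int_0^{\cdot} \theta\, d^{\nabla^{g}} Y_s$ is a local martingale, and its stopped version $\int_0^{\cdot \wedge T}\theta\, d^{\nabla^{g}} Y_s$ is again a local martingale. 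Likewise $\int_0^{\cdot}\theta\, d^{\nabla^{g}} X_s$ is a local martingale, and the post-$T$ increment in parentheses is the difference of a local martingale and its stopped version, hence a local martingale. A sum of local martingales being a local martingale, $\int_0^t \theta\, d^{\nabla^{g}}\bar{Y}_s$ is a local martingale for every $\theta$, so $\bar{Y}$ is a $\nabla^{g}$-martingale.

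For the second condition I would split the quadratic integral in the same way. For $b \in \Gamma(T^{(2,0)}M)$,
\[
\int_0^t b(d\bar{Y}, d\bar{Y})_s = \int_0^{t\wedge T} b(dY, dY)_s + \int_{t\wedge T}^{t} b(dX, dX)_s = \int_0^{t\wedge T}\mathrm{tr}\, b_{Y_s}\, ds + \int_{t\wedge T}^{t}\mathrm{tr}\, b_{X_s}\, ds,
\]
where the last equality uses that $X$ and $Y$ are $g$-Brownian motions. Since $\bar{Y}_s = Y_s$ for $s \le T$ and $\bar{Y}_s = X_s$ for $s \ge T$, the two integrands are respectively $\mathrm{tr}\, b_{\bar{Y}_s}$ on their domains, and the two pieces recombine into $\int_0^t \mathrm{tr}\, b_{\bar{Y}_s}\, ds$, as required. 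Together with the previous paragraph this shows that $\bar{Y}$ is a $g$-Brownian motion.

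I expect the main obstacle to be the rigorous justification of the splitting of the It\^o integral across the stopping time $T$, namely that the local coordinate expression for $\int_0^t\theta\, d^{\nabla^{g}}\bar{Y}_s$ genuinely decomposes into the $Y$-part on $[0,T]$ and the $X$-part on $[T,t]$ with no extra contribution at $T$. This rests on the continuity of $\bar{Y}$ at $T$, so that the bracket terms $[\bar{Y}^j, \bar{Y}^k]$ carry no mass at $\{T\}$, together with the standard fact that stopping and taking post-$T$ increments preserve the local-martingale property; once these are in place, the rest is a routine recombination of the two time regimes.
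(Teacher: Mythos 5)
Your proof is correct and takes essentially the same approach the paper intends: the paper's own proof is just the one-line assertion that the claim is ``a straightforward proof from definition of Brownian motion,'' and your argument is exactly that direct verification, carried out in full --- splitting the It\^o integral and the quadratic integral at the coupling time $T$, using locality of the integrals, continuity of $\bar{Y}$ at $T$ (since $X_T = Y_T$ a.s.), and the fact that stopping and post-$T$ increments preserve the local-martingale property. In short, you supply precisely the details the paper leaves to the reader, with no gap in the argument.
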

\begin{proof}
It is a straightforward proof from definition of Brownian motion.
\qed
\end{proof}

%
%
\section{Harmonic sections}

Let $P(M,G)$ be a principal fiber bundle over $M$ and $E(M,N,G,P)$ be an associate fiber bundle to $P(M,G)$. We denote
the canonical projection from $P\times N$ into $E$ by $\mu$, namely, $\mu(p,\xi)=p\cdot\xi$. For each $p \in P$, we
have the map $\mu_{p}:N \rightarrow E$ defined by  $\mu_{p}(\xi) = \mu(p,\xi)$. Let $\sigma:E \rightarrow M$ be a
section of projection $\pi_{E}$, that is, $\pi_{E} \circ \sigma = Id_{M}$. There exists a unique equivariant lift
$F_{\sigma}:P \rightarrow N$ associated to $\sigma$ which is defined by
\begin{equation}\label{equivariantlift}
F_{\sigma}(p) = \mu_{p}^{-1} \circ \sigma \circ \pi (p).
\end{equation}
The equivariance property of $F_{\sigma}$ is given by
\[
F_{\sigma}(p \cdot g) = g^{-1} \cdot F_{\sigma}(p), \ \ \ g \in G.
\]

Let us endow $P$ and $M$ with Riemmanian metrics $k$ and $g$, respectively, such that $\pi:(P,k) \rightarrow (M,g)$ is
a Riemmanian submmersion. Let $\omega$ be a connection form on $P$. We observe that the connection form $\omega$ yields
a horizontal structure on $E$, that is, for each $b \in E$, $T_{b}E = V_{b}E \oplus H_{b}E$, where $V_{b}E:=
\mathrm{Ker}(\pi_{Eb*})$ and $H_{b}E$ is the horizontal subspace done by $\omega$ on $E$ (see for example \cite{kobay},
pp.87). We denote by $\mathbf{v}:TE \rightarrow VE$ and $\mathbf{h}: TE \rightarrow HE$ the vertical and horizontal
projection, respectively.

Let $\nabla^{M}$ denote the Levi-Civita connection on $M$ and $\nabla^{E}$ be a symmetric connection on $E$.
We are interested in connections $\nabla^{E}$ such that the projection $\pi_{E}$ from $E$ into $M$ has totally geodesic
fibres.

We denote by $\nabla^{v}$ the vertical component of connection $\nabla^{E}$ on $TE$, that is, for $X$ a vector field
and $V$ a vertical vector field on $E$ we have
\[
\nabla^{v}_{X}V = \mathbf{v}\nabla^{E}_{X}V.
\]
Let us denote $\nabla^{x}$ the induced connection of $\nabla^{v}$ over fiber $\pi_{E}^{-1}(x)$ for all $x \in M$. We
endow $N$ with a connection $\nabla^{N}$ such that, for each $p \in P$, $\mu_{p}$ is an affine map over its image, the
fiber $\pi_{E}^{-1}(x)$ with $\pi(p)=x$.

As $\pi_{E}$ has totally geodesics fibres we have, for each $p \in P$, $\beta^{v}_{\mu_{p}} = \beta^{x}_{\mu_{p}}$, where
\[
\beta^{v}_{\mu_{p}}= \nabla^{v}\circ \mu_{p*} - \mu_{p*}\nabla^{N}, \ \ \beta^{x}_{\mu_{p}}= \nabla^{x}\circ \mu_{p*} - \mu_{p*}\nabla^{N}
\]
and $\pi(p)=x$. Since $\mu_{p}$ is affine map, for each $p \in P$, we conclude that $\beta^{v}_{\mu_{p}} \equiv 0$.
In summary we have the following

\begin{lemma}\label{Sle1}
Let $\pi_{E}$ be the projection from $E$ into $M$. Let $\nabla^{E}$ be a symmetric connection on $E$ such that $\pi_{E}$ has totally geodesic fibres. Suppose that $N$ is endowed with a connection such that, for each $p \in P$, $\mu_{p}$ is an affine map. Then $\beta^{v}_{\mu_{p}} \equiv 0$ for all $p \in P$.
\end{lemma}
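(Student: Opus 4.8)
The plan is to reduce the statement to the two facts already isolated in the discussion preceding it: that the totally geodesic condition forces the two second fundamental forms $\beta^{v}_{\mu_{p}}$ and $\beta^{x}_{\mu_{p}}$ to coincide, and that the affineness hypothesis on $\mu_{p}$ makes $\beta^{x}_{\mu_{p}}$ vanish. First I would unwind the definitions: for vector fields $U,V$ on $N$,
\[
\beta^{v}_{\mu_{p}}(U,V) = \nabla^{v}_{\mu_{p*}U}(\mu_{p*}V) - \mu_{p*}(\nabla^{N}_{U}V), \qquad \beta^{x}_{\mu_{p}}(U,V) = \nabla^{x}_{\mu_{p*}U}(\mu_{p*}V) - \mu_{p*}(\nabla^{N}_{U}V),
\]
so that these two tensors differ only through the terms $\nabla^{v}_{\mu_{p*}U}(\mu_{p*}V)$ and $\nabla^{x}_{\mu_{p*}U}(\mu_{p*}V)$.

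The key geometric observation is that the image of $\mu_{p}$ is exactly the fibre $\pi_{E}^{-1}(x)$ with $\pi(p)=x$, so $\mu_{p*}U$ and $\mu_{p*}V$ are vertical vectors tangent to that fibre. Hence $\beta^{v}_{\mu_{p}}-\beta^{x}_{\mu_{p}}$ is $(\nabla^{v}-\nabla^{x})$ applied to fibre-tangent fields. To make this precise I would factor $\mu_{p}$ as the diffeomorphism onto the fibre followed by the inclusion $\iota:\pi_{E}^{-1}(x)\to E$ and invoke the Gauss-type splitting for the fibre regarded as a submanifold of $(E,\nabla^{E})$: the intrinsic connection $\nabla^{x}$ and the vertical part $\nabla^{v}=\mathbf{v}\nabla^{E}$ of the ambient connection differ precisely by the second fundamental form of the fibre, which is its normal (here horizontal) component. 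Since $\pi_{E}$ has totally geodesic fibres, that second fundamental form vanishes, and consequently $\beta^{v}_{\mu_{p}}=\beta^{x}_{\mu_{p}}$ on all of $N$.

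It then remains to show $\beta^{x}_{\mu_{p}}\equiv 0$. This is immediate from the hypothesis: by assumption $\mu_{p}$ is an affine map onto its image, the fibre $\pi_{E}^{-1}(x)$ equipped with the induced connection $\nabla^{x}$, and an affine map has identically vanishing second fundamental form, i.e. $\beta^{x}_{\mu_{p}} = \nabla^{x}\circ\mu_{p*}-\mu_{p*}\nabla^{N}\equiv 0$. Combining the two steps yields $\beta^{v}_{\mu_{p}}\equiv 0$ for every $p\in P$.

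I expect the only delicate point to be the middle step, namely making precise the identification of $\nabla^{v}-\nabla^{x}$ with the second fundamental form of the fibre and thereby pinpointing the exact use of the totally geodesic hypothesis. One must be careful that $\mu_{p}$ is affine only onto its image, so the vanishing of the second fundamental form is genuinely with respect to $\nabla^{x}$ rather than $\nabla^{v}$ or $\nabla^{E}$, and the totally geodesic condition is exactly what bridges that gap; the remaining manipulations are purely formal consequences of the definitions of $\beta^{v}_{\mu_{p}}$ and $\beta^{x}_{\mu_{p}}$.
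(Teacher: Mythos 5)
Your proposal is correct and takes essentially the same route as the paper, whose proof is exactly the two-step argument in the paragraph preceding the lemma: totally geodesic fibres give $\beta^{v}_{\mu_{p}} = \beta^{x}_{\mu_{p}}$, and affineness of $\mu_{p}$ onto its image gives $\beta^{x}_{\mu_{p}} \equiv 0$, hence $\beta^{v}_{\mu_{p}} \equiv 0$. One small slip in your middle step: the Gauss-type splitting relates the ambient connection $\nabla^{E}$ (not its vertical part $\nabla^{v}$) to the induced connection $\nabla^{x}$ via the second fundamental form of the fibre; since $\nabla^{v}=\mathbf{v}\nabla^{E}$ and that second fundamental form is horizontal-valued, the vertical projections $\nabla^{v}$ and $\nabla^{x}$ agree on fibre-tangent fields essentially by definition of the induced connection, so the identity $\beta^{v}_{\mu_{p}}=\beta^{x}_{\mu_{p}}$ comes out even more directly than your phrasing suggests.
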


Let $\sigma$ be a section of $\pi_{E}$. Write $\sigma_{*} = \mathbf{v}\sigma_{*} + \mathbf{h}\sigma_{*}$, where $\mathbf{v}\sigma_{*}$ and $\mathbf{h}\sigma_{*}$ are the vertical and the horizontal component of $\sigma_{*}$, respectively. The second fundamental form for $\mathbf{v}\sigma_{*}$ is defined by
\[
 \beta_{\sigma}^{v} = \bar{\nabla}^{v} \circ \mathbf{v}\sigma_{*} - \mathbf{v}\sigma_{*} \circ \nabla^{M},
\]
where $\bar{\nabla}^{v}$ is the induced connection on $\sigma^{-1}E$. The vertical tension field is given by
\[
 \tau_{\sigma}^{v} = \rm{tr}\beta_{\sigma}^{v}.
\]

In the following we extend the definition given by C. M. Wood \cite{wood2} of harmonic section.

\begin{definition}
1. A section $\sigma$ of $\pi_{E}$ is called harmonic section if $\tau_{\sigma}^{v} =0$;\\
2. A differential map $F:P \rightarrow N$ is called horizontally harmonic if \linebreak $\tau_{F} \circ (H \otimes H) =
0 $, where $H$ is horizontal lift from $M$ into $P$.
\end{definition}

\begin{definition}
1. Let $\theta$ be a 1-form on $E$. We call $\theta$ a vertical form if $\theta \in VE^{*}$, the adjoint of vertical bundle $VE$. \\
2. A $E$-valued semimartingale $X$ is called a vertical martingale if, for every vertical form $\theta$ on $E$, $\int_{0}^{t} \theta d^{\nabla^{v}} X_{s}$ is a real local martingale.
\end{definition}

Now, we give a characterization of harmonic section in the context that we are working.

\begin{theorem}\label{Steo1}
Let $E(M,N,G,P)$ be an associated fiber bundle to principal fiber bundle $P(M,G)$, where $(M,g)$ is a Riemannian manifold. Let us endow $E$ with a symmetric connection on $\nabla^{E}$. Then a section $\sigma$ of $\pi_{E}$ is harmonic section if and only if, for every $g$-Brownian motion $B$ in $M$, $\sigma(B)$ is a vertical martingale.
\end{theorem}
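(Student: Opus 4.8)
The plan is to adapt the geometric It\^o formula (\ref{ito-ito}) to the section $\sigma:M\rightarrow E$, working with the vertical connection $\nabla^{v}$ and with vertical forms, and then to read off harmonicity from a Doob--Meyer type argument exactly as was done in the passage following (\ref{harmonic}) to characterize harmonic maps.

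First I would establish the vertical analogue of (\ref{harmonic}): for every vertical form $\theta$ on $E$ and every $g$-Brownian motion $B$ in $M$,
\[
\int_0^t \theta\, d^{\nabla^{v}}\sigma(B_s) = \int_0^t \sigma^{*}\theta\, d^{\nabla^{g}} B_s + \frac{1}{2}\int_0^t (\tau_{\sigma}^{v})^{*}\theta_{B_s}\, ds.
\]
This is obtained just as (\ref{harmonic}) was deduced from (\ref{ito-ito}) and (\ref{Brownian}): the geometric It\^o formula applied to $\sigma$ produces a drift governed by a second fundamental form, and because $\theta$ is a vertical form only the vertical part of the covariant differentiation survives, so the relevant tensor is $\beta_{\sigma}^{v}$ rather than the full second fundamental form of $\sigma$ with respect to $\nabla^{E}$; the Brownian property (\ref{Brownian}) then converts the quadratic integral $\frac{1}{2}\int_0^t (\beta_{\sigma}^{v})^{*}\theta\,(dB,dB)_s$ into $\frac{1}{2}\int_0^t \mathrm{tr}\,(\beta_{\sigma}^{v})^{*}\theta_{B_s}\, ds=\frac{1}{2}\int_0^t (\tau_{\sigma}^{v})^{*}\theta_{B_s}\, ds$, using $\tau_{\sigma}^{v}=\mathrm{tr}\,\beta_{\sigma}^{v}$ and the symmetry of $\beta_{\sigma}^{v}$ (which comes from $\nabla^{E}$ being symmetric).

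With this identity the two implications are routine. For the forward direction, if $\tau_{\sigma}^{v}=0$ then the drift vanishes, so $\int_0^t \theta\, d^{\nabla^{v}}\sigma(B_s)=\int_0^t \sigma^{*}\theta\, d^{\nabla^{g}} B_s$; since $B$ is a $g$-Brownian motion, hence a $\nabla^{g}$-martingale, the right-hand side is a real local martingale for every vertical $\theta$, which is exactly the assertion that $\sigma(B)$ is a vertical martingale. Conversely, if $\sigma(B)$ is a vertical martingale for every $g$-Brownian motion $B$, then for each vertical $\theta$ the difference of the two local martingales $\int_0^t \theta\, d^{\nabla^{v}}\sigma(B_s)$ and $\int_0^t \sigma^{*}\theta\, d^{\nabla^{g}} B_s$ equals the continuous finite-variation process $\frac{1}{2}\int_0^t (\tau_{\sigma}^{v})^{*}\theta_{B_s}\, ds$; being a continuous local martingale of bounded variation it is identically zero, so the continuous integrand vanishes, giving $\theta(\tau_{\sigma}^{v})_{B_0}=0$. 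Since a $g$-Brownian motion may be started at any point $x\in M$ and the vertical forms separate the fibres of $VE$, we conclude $\tau_{\sigma}^{v}\equiv 0$, i.e. $\sigma$ is a harmonic section.

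The main obstacle is the first step: justifying the vertical It\^o formula and, in particular, that the drift is controlled exactly by $\beta_{\sigma}^{v}=\bar{\nabla}^{v}\circ\mathbf{v}\sigma_{*}-\mathbf{v}\sigma_{*}\circ\nabla^{M}$ and not by the full second fundamental form of $\sigma$. This is precisely where the splitting $TE=VE\oplus HE$ and the restriction to vertical forms $\theta\in\Gamma(VE^{*})$ enter: because $\theta$ annihilates $HE$, both the pullback $\sigma^{*}\theta$ and the correction term see only the vertical projection $\mathbf{v}$ of the relevant covariant derivatives, so replacing $\nabla^{E}$ by $\nabla^{v}$ leaves the integrals unchanged and reproduces $\beta_{\sigma}^{v}$. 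Once this identification is made, the remainder is the standard Doob--Meyer argument above.
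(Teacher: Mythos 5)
Your proposal is correct and follows essentially the same route as the paper: the paper's proof also applies the vertical analogue of formula (\ref{harmonic}) to $\sigma$ with the connection $\nabla^{v}$, notes that $\int_0^t \sigma^{*}\theta\, d^{\nabla^{M}}B_s$ is a real local martingale, and concludes by the Doob--Meyer decomposition since $B$ and $\theta$ are arbitrary. Your write-up is in fact more careful than the paper's, which invokes the vertical It\^o formula without spelling out why only $\beta_{\sigma}^{v}$ (rather than the full second fundamental form) governs the drift, or how arbitrariness of the starting point yields $\tau_{\sigma}^{v}\equiv 0$.
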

\begin{proof}
Let $B$ be a $g$-Brownian motion in $M$ and $\theta$ be a vertical form on $E$. By formula (\ref{harmonic}),
\[
\int_{0}^{t} \theta~ d^{\nabla^{v}}\sigma(B_{s}) = \int_{0}^{t} \sigma^{*} \theta ~d^{\nabla^{M}}B_{s}
+ \frac{1}{2}\int_{0}^{t} \tau_{\sigma}^{v*}\theta(B_{s})~ds.\\
\]
We observe that $\int \sigma^{*} \theta~d^{\nabla^{M}}B_{s}$ is a real local martingale. Since $B$ and $\theta$ are arbitraries, Doob-Meyer decomposition assure that $\int_{0}^{t} \theta~ d^{\nabla^{v}}\sigma(B_{s})$ is real local martingale if and only if $\tau_{\sigma}^{v}$ vanishes. From  definitions of vertical martingale and harmonic section we conclude the proof. \qed
\end{proof}
\begin{remark}
In this Theorem we observe that we do not need the hypotesis of principal fiber bundle be a Riemannian manifold and $\pi: P \rightarrow M$ be a Riemannian submmersion. It is possible to give this characterization in general way for a submersion.
\end{remark}

Before we follow we need to give the definition of horizontal semimartingale in the fiber bundle $P$. Furthermore, to
prove our next result it is necessary to writer a horizontal semimartingale in $P$ as coordinates of a local coordinate
system of $P$ .
\begin{definition}
Let $Y$ be a semimartingale in $P$. We say that $Y$ is a horizontal martingale if  $\int_{0}^{t} \omega \delta Y_{s} = 0$, where $\omega$ is the connection 1-form on $P$.
\end{definition}

It is clear that if $Y$ is a horizontal semimartingale in $P$ then $\delta Y_{t}$ in $HP$, the horizontal bundle given by the connection 1-form $\omega$. Let $(U\times V, x^{i},v^{l})$ be a local coordinate system in $P$. We wish to describe $Y$ in this coordinates. For this end, firstly, we need describe the generators for $H_{p}P$, with $p \in U \times V$. In fact, we observe that $(U, x^{i})$ is a local coordinate system in $M$, so we denote the coordinate vector fields $\partial / \partial x^{i}$ by  $D_{i}$, $i=1, \ldots, n$. Let $H_{p}: T_{\pi(p)}M \rightarrow T_{p}P$, $p \in P$, the family of linear isomorphism yielded by $\omega$. We recall that this family is called horizontal lift from $M$ to $P$ associated to $\omega$. Because $D_{i}$, $i=1, \ldots n$, is a local frame on $U$, we have that $HD_{i}$ is a horizontal local frame on $U \times V$. Let us denote $\bar{D}_{l}$ the coordinate vector field in $VP$, the vertical fiber bundle. To
obtain or goal is sufficient describe the vector $HD_{i}$ in terms of $D_{i}$ and $\bar{D}_{l}$. In  fact,
for $i = 1, \ldots, n$ a simple account shows that
\[
HD_{i} = D_{i} - \omega_{i}^{l}\bar{D}_{l}, \ \ i=1, \ldots n \textrm{ and}  \ \ l=1, \ldots m
\]
where $\omega^{i}_{l}$ are coordinates of $\omega(D_{i})$ in the Lie algebra $\mathfrak{g}$  and $m$ is the dimension of Lie group $G$ that acts in $P$. It follows that
\[
\delta Y_{t} = \delta Y_{t}^{i}HD_{i} = \delta Y_{t}^{i}D_{i} - \delta Y_{t}^{i} \omega_{i}^{l}\bar{D}_{l}.
\]
From this we conclude that $Y$ has the following coordinates in $U \times V$
\begin{equation}\label{horcoord}
Y_{t} = ( Y_{t}^{1}, \ldots, Y_{t}^{n}, -\int_{0}^{t} \delta Y_{s}^{i} \omega_{i}^{1}, \ldots , -\int_{0}^{t} \delta Y_{s}^{i} \omega_{i}^{m})
\end{equation}
We use this fact in the proof of our next Lemma, which is a key in the demonstration of our after Proposition.

\begin{lemma}\label{Sle2}
Let $X_{t}$ be a E-valued semimartingale such that $X_{t}=\mu(Y_{t},\xi_{t})$, where $Y_{t}$ is a horizontal
$P$-semimartingale and $\xi_{t}$ is a $N$-semimartingale. Then
\[
 \int_{0}^{t} \theta d^{\nabla^{v}}X_{s} = \int_{0}^{t} \theta_{\alpha} d^{\nabla^{v}} \mu_{Y_{s}}(\xi_{s})
\]
 for all vertical form $\theta$ on $E$, where $\mu_{Y_{t}}(\xi_{t})$ is the vertical semimartingale in $E$ associated to $X$.
\end{lemma}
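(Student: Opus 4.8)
The plan is to pass to a local computation and to exploit two structural facts built into the setup: since $\theta$ is a vertical form it annihilates $HE$, and since the horizontal spaces on $E$ are those induced by $\omega$, the projection $\mu$ carries $P$-horizontal vectors to $E$-horizontal vectors. Concretely, I would fix a local coordinate system $(x^{i},\eta^{a})$ on $E$ adapted to the splitting $TE=VE\oplus HE$, so that $VE$ is spanned by the $\partial/\partial\eta^{a}$ and any vertical form is written $\theta=\theta_{a}(d\eta^{a}+\Gamma^{a}_{i}\,dx^{i})$, the $\Gamma^{a}_{i}$ being the coefficients that define $HE$. Expanding the vertical It\^o integral in these coordinates,
\[
\int_{0}^{t}\theta\,d^{\nabla^{v}}X_{s}=\int_{0}^{t}\theta_{\alpha}(X_{s})\,dX_{s}^{\alpha}+\frac{1}{2}\int_{0}^{t}(\Gamma^{v})^{\alpha}_{\beta\gamma}(X_{s})\,\theta_{\alpha}(X_{s})\,d[X^{\beta},X^{\gamma}]_{s},
\]
splits the integral into a first order part and a quadratic covariation part, and the whole argument reduces to showing that every contribution fed by the horizontal motion of $Y$ disappears.

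For the first order part I would invoke the horizontal coordinate representation (\ref{horcoord}) of $Y$. That formula records exactly the coupling $HD_{i}=D_{i}-\omega^{l}_{i}\bar{D}_{l}$ between the base coordinates $Y^{i}$ and the $P$-fibre coordinates $-\int_{0}^{\cdot}\delta Y^{i}_{s}\omega^{l}_{i}$ of the horizontal semimartingale $Y$. Pushed forward by $\mu$, this is precisely the coupling needed so that the base and $P$-fibre increments of $\delta Y$ assemble into an $E$-horizontal vector, on which $\theta$ vanishes; hence at the Stratonovich level only the $\xi$-motion survives, entering through the affine maps $\mu_{Y_{s}}$, and the surviving first order term is $\int_{0}^{t}\mu_{Y_{s}}^{*}\theta\,\delta\xi_{s}$.

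The main obstacle is the quadratic covariation part, because the brackets $d[X^{\beta},X^{\gamma}]$ mix the base, the $P$-fibre and the $N$ directions and, as $Y$ and $\xi$ are not assumed independent, the cross brackets need not vanish. I would treat this in three blocks. After contraction with $\theta$ and the vertical Christoffel symbols, the pure $\xi$-$\xi$ block is exactly the second fundamental form $\beta^{v}_{\mu_{Y_{s}}}$ evaluated on $(d\xi,d\xi)$; by Lemma \ref{Sle1} this form vanishes, so the $\xi$-$\xi$ block produces nothing beyond the affine push-forward of $\xi$ already accounted for. For the pure $Y$-$Y$ block and the mixed $Y$-$\xi$ block I would again combine verticality of $\theta$ with (\ref{horcoord}): the coupling imposed by $\omega$ forces the coordinate directions of $Y$ to appear only in the combination that $\mu$ sends into $HE$, on which both $\theta$ and its vertical covariant derivative vanish, so these two blocks cancel identically. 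Converting the surviving Stratonovich term back to an It\^o integral and collecting everything leaves precisely $\int_{0}^{t}\theta\,d^{\nabla^{v}}\mu_{Y_{s}}(\xi_{s})$, the vertical It\^o integral along the vertical semimartingale $\mu_{Y_{s}}(\xi_{s})$, which is the claimed identity.
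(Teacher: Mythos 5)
Your set-up and your first-order analysis run parallel to the paper's proof: the paper likewise reduces everything to the product coordinates of $(Y_t,\xi_t)$ furnished by (\ref{horcoord}) and kills the first-order $dY$-terms because $\mu_{\xi*}$ carries $P$-horizontal vectors into $HE$, which vertical forms annihilate. The genuine gap is in your treatment of the quadratic covariation blocks, and it is exactly the pitfall the paper flags in the remark following the lemma. You argue ``at the Stratonovich level'' and convert back to It\^o at the end; to cancel the $Y$-$Y$ and $Y$-$\xi$ blocks you then need not only that $\theta$ vanishes on $HE$ but, as you put it, that ``its vertical covariant derivative'' does too. That second assertion is unjustified: the Stratonovich--It\^o conversion produces correction terms of the form $\theta(\mathbf{v}\nabla^{E}_{X}V)$ and $\theta(\mathbf{v}\nabla^{E}_{V}X)$ with $X$ horizontal and $V$ vertical, and verticality of $\theta$ gives no control over these; the totally geodesic fibres hypothesis controls only $\mathbf{h}\nabla^{E}_{V}W$ for $V,W$ vertical, not the mixed terms. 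This is precisely the ``restriction about connection $\nabla^{E}$'' which the paper's remark says a Stratonovich-based proof would require, and which the lemma does not assume. The paper's proof never leaves It\^o calculus: it expands $v^{\alpha}\mu(Y_{t},\xi_{t})$ with the classical It\^o formula on $P\times N$, as in (\ref{Sle1eq2}), and cancels every term carrying $dY$ or a bracket with $Y$ using only the pointwise identity $dv^{\alpha}\bigl(\mu_{\xi*}(D_{i}-\omega_{i}^{l}\bar{D}_{l})\bigr)\equiv 0$; no derivative of $\theta$ ever enters, because in the vertical It\^o integral $\theta$ is only evaluated, never differentiated.

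A second, structural, discrepancy: you invoke Lemma \ref{Sle1} to dispose of the $\xi$-$\xi$ block, whereas the paper's proof of Lemma \ref{Sle2} makes no use of Lemma \ref{Sle1} at all --- and it should not. The identity to be proved compares two $\nabla^{v}$-It\^o integrals, and the pure $\xi$-$\xi$ contribution is the same expression on both sides (it is untouched by freezing $Y$), so it does not have to vanish; it simply cancels. By identifying it with $\beta^{v}_{\mu_{Y_{s}}}(d\xi,d\xi)$ and killing it via Lemma \ref{Sle1}, you are in effect proving $\int_{0}^{t}\theta\,d^{\nabla^{v}}X_{s}=\int_{0}^{t}\mu_{Y_{s}}^{*}\theta\,d^{\nabla^{N}}\xi_{s}$, which is Lemma \ref{Sle2} combined with Proposition \ref{Sprop1}(i), and you must then use the affineness of $\mu_{p}$ a second time to return to the stated right-hand side. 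This detour imports the totally geodesic fibres, the connection $\nabla^{N}$ and the affine hypothesis on $\mu_{p}$ into a statement whose role in the paper's architecture is precisely to be independent of them, and it is circular in presentation, since Proposition \ref{Sprop1}(i) is itself deduced from this lemma.
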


\begin{proof}
Let $(U \times V \times W, x^{i}, v^{l}, \nu^{\alpha})$ be a local coordinate system in $P\times N$. Thus $(U,x^{i})$,
$(V, v^{l})$ and $(W, \nu^{\alpha})$ are local coordinate systems in $M$, $G$ and $N$, respectively. Let $X_{t}$ be a
$E$-semimartingale as assumption and $\theta$ a vertical form.
By definition of It\^o integral,
\begin{equation}\label{Sle1eq1}
 \int_{0}^{t} \theta d^{\nabla^{v}}X_{s} = \int_{0}^{t} \theta_{\alpha}(X_{s}) d X^{\alpha}_{s} +\frac{1}{2}\int_{0}^{t} \Gamma^{\alpha}_{\beta \gamma}(X_{s}) \theta_{\alpha}(X_{s}) d[X^{\beta},X^{\gamma}]_{s},
\end{equation}
where $X^{\alpha}_{t}= \nu^{\alpha}(X_{t})$. It is sufficient to prove that $dX^{\alpha}_{t} =
d(\mu_{Y_{t}}\xi_{t})^{\alpha}$ for all $\alpha=1, \ldots, r$. Here, we observe that $\nu^{\alpha}$, $\alpha=1,\ldots,
n$, are vertical local coordinates in $E$ and it is not dependent of local coordinate system, because It\^o integral do
not depend. Let $\omega$ be a connection form on $P$ and $(U \times V, x^{i}, v^{l})$ be a local coordinate system in
$P$. Since $Y$ is a horizontal semimartingale in $P$, from (\ref{horcoord}) we see that
\[
Y = (Y^{1}, \ldots, Y^{n}, -\int dY^{i}\omega_{i}^{1}, \ldots, -\int dY^{i}\omega_{i}^{m}).
\]
Therefore, the process $(Y_{s},\xi_{s})$ can be written, in coordinates, as
\[
(Y^{1}, \ldots, Y^{n}, -\int dY^{i}\omega_{i}^{1}, \ldots, -\int dY^{i}\omega_{i}^{n},\xi^{1},\ldots,\xi^{r}).
\]
Let us denote $D_{i} = \partial/\partial x^{i}$, $\bar{D}_{l} = \partial/\partial v^{l}$ and $\tilde{D}_{\beta} = \partial/\partial \nu^{\beta}$,
for $i=1, \ldots, n$, $l=1, \ldots,m$ and $\beta=1, \ldots,r$. In the sequence, we consider the functions applied at $(Y_{t},\xi_{t})$. Applying
the It\^o formula in $v^{\alpha}\mu(Y_{t},\xi_{t})$ we obtain 
{\setlength\arraycolsep{2pt}
\begin{eqnarray}\label{Sle1eq2}
\lefteqn{dv^{\alpha}\mu(Y_{t},\xi_{t})
 =  D_{i}(v^{\alpha}\mu)dY^{i}_{t} + \bar{D}_{l}(v^{\alpha}\mu)d(\int dY^{i}\omega^{i}_{l})_{t} + \tilde{D}_{\beta}(v^{\alpha}\mu)d\xi^{\beta}_{t}}\nonumber \\
& + &\frac{1}{2}D_{j}D_{i}(v^{\alpha}\mu)d[Y^{i},Y^{j}]_{t}+ \frac{1}{2}\bar{D}_{l}D_{i}(v^{\alpha}\mu)d[-\int dY^{j}\omega^{j}_{l},Y^{j}]_{t}\nonumber\\
& + &\frac{1}{2}D_{i}\bar{D}_{l}(v^{\alpha}\mu)d[Y^{j},-\!\int dY^{j}\omega^{j}_{l}]_{t} + \frac{1}{2}\bar{D}_{k}\bar{D}_{l}(v^{\alpha}\mu)d[-\!\int dY^{j}\omega^{j}_{k},-\!\int dY^{j}\omega^{j}_{l}]_{t}\nonumber \\
& + &\frac{1}{2} D_{i}\tilde{D}_{\beta}(v^{\alpha}\mu)d[Y^{i},\xi^{\beta}]_{t} + \frac{1}{2}\tilde{D}_{\beta}D_{i}(v^{\alpha}\mu)d[\xi^{\beta},Y^{i}]_{t}\nonumber\\
& + &\frac{1}{2}\tilde{D}_{\beta}\bar{D}_{l}(v^{\alpha}\mu)d[\xi^{\beta},-\int dY^{j}\omega^{j}_{l}]_{t} + \frac{1}{2}\bar{D}_{l}\tilde{D}_{\beta}(v^{\alpha}\mu)d[-\int dY^{j}\omega^{j}_{l},\xi^{\beta}]_{t} \nonumber\\
& + &\frac{1}{2}\tilde{D}_{\beta}\tilde{D}_{\gamma}(v^{\alpha}\mu)d[\xi^{\beta},\xi^{\gamma}]_{t}.
\end{eqnarray}}
Now, we observe that $D_{i} -\omega_{i}^{l}\bar{D}_{l}$ is a horizontal vector field on $P$. Hence, for $\xi \in N$,
$\mu_{\xi*}(D_{i} -\omega_{i}^{l}\bar{D}_{l})$ is a horizontal vector field on $E$. However $dv^{\alpha}$ is a vertical
form on $E$ for all $\alpha=1, \ldots,r$. It follows that $dv^{\alpha}\mu_{\xi*}(D_{i}
-\omega_{i}^{l}\bar{D}_{l})\equiv 0$. Applying these observations in the equality above we conclude that
\[
dv^{\alpha}\mu(Y_{t},\xi_{t}) = \tilde{D}_{\beta}(v^{\alpha}\mu)d\xi^{\beta}_{t} + \frac{1}{2}\tilde{D}_{\beta}\tilde{D}_{\gamma}(v^{\alpha}\mu)d[\xi^{\beta},\xi^{\gamma}]_{t}.
\]
It follows that
\[
 dX^{\alpha}_{t} = dv^{\alpha}\mu(Y_{t},\xi_{t}) = dv^{\alpha}\mu_{Y_{t}}\xi_{t} = d(\mu_{Y_{t}}\xi_{t})^{\alpha}.
\]
Applying this equality in (\ref{Sle1eq1}) we conclude that
\[
\int_{0}^{t} \theta d^{\nabla^{v}}X_{s} = \int_{0}^{t}\theta d^{\nabla^{v}}\mu_{Y_{s}}\xi_{s}.
\]
\qed
\end{proof}

\begin{remark}
In the demonstration above one could think that the local coordinate system is relevant, but if we see (\ref{Sle1eq2})
as the second vector field of $P\times N$-semimartingale $(Y_{t},\xi_{t})$ applied to function $\nu^{\alpha}$ then from
Schwartz principle the second vector field $(Y_{t},\xi_{t})$ do not depend of local coordinate system.
\end{remark}
\begin{remark}
The Lemma above shows the strength of It\^o integral and  It\^o formula. One can try to use the Stratonovich-It\^o
conversion formula ( see for example \cite{cat-stel}) to conclude the same Lemma above. For this end, one will need
some restriction about connection $\nabla^{E}$. In fact, we need that the vertical part of $\nabla_{V}^{E}X$ and
$\nabla_{X}^{E}V$ vanishes, where $V$ and $X$ are vertical and horizontal vector fields, respectively, on $E$.
\end{remark}

Now, we relate the geometric and stochastic concepts of harmonic section and horizontally harmonic map.

\begin{proposition}\label{Sprop1}
Let $P(M,G)$ be a Riemannian principal fiber bundle endowed with a connection form $\omega$ and $M$ a Riemannian manifold such that the projection $\pi$ of $P$ into $M$ is a Riemannian submmersion. Let $E(M,N,G,P)$ be an associated fiber to $P$ endowed with a  symmetric connection $\nabla^{E}$ such that the projection $\pi_{E}$ has totally geodesic fibres. Moreover, suppose that $N$ has a connection $\nabla^{N}$ such that $\mu_{p}$ is an affine map for each $p \in P$.  Then
\begin{description}
\item[(i)] a $E$-valued semimartingale $X$ is vertical martingale if and only if $\mu_{Y}^{-1}\circ X$ is a
$\nabla^{N}$- martingale in $N$, where $Y=\pi_{E}(X)^{h}$ is the horizontal lift of $\pi_{E}(X)$ to  $P$;
\item[(ii)] a equivariant lift $F_{\sigma}$ associated to $\sigma$, $\sigma$ a section of $\pi_{E}$,  is horizontally harmonic map if
and only if, for every horizontal Brownian motion $B^{h}$ in $P$, $F_{\sigma}(B^{h})$ is a $\nabla^{N}$-martingale.
\end{description}
\end{proposition}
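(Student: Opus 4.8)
The plan is to prove the two items separately: item (i) rests on Lemma~\ref{Sle2} together with the affineness recorded in Lemma~\ref{Sle1}, while item (ii) rests on the geometric It\^o formula (\ref{ito-ito}) together with a martingale property of the horizontal lift. For (i), I would first write $X_{s}=\mu(Y_{s},\xi_{s})$ with $Y=\pi_{E}(X)^{h}$ the horizontal lift of $\pi_{E}(X)$ and $\xi=\mu_{Y}^{-1}\circ X$. Fixing a vertical form $\theta$ on $E$, Lemma~\ref{Sle2} gives at once $\int_{0}^{t}\theta\,d^{\nabla^{v}}X_{s}=\int_{0}^{t}\theta\,d^{\nabla^{v}}\mu_{Y_{s}}(\xi_{s})$, so the horizontal motion of $Y$ contributes nothing. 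Next I would expand the right-hand side with the It\^o formula (\ref{ito-ito}) for the fibre map $\mu_{Y_{s}}$: since $\mu_{p}$ is affine for each $p$, Lemma~\ref{Sle1} gives $\beta^{v}_{\mu_{p}}\equiv 0$, the quadratic correction drops out, and $\int_{0}^{t}\theta\,d^{\nabla^{v}}X_{s}=\int_{0}^{t}\mu_{Y_{s}}^{*}\theta\,d^{\nabla^{N}}\xi_{s}$. Because $\mu_{Y_{s}*}$ is a pointwise isomorphism of $T_{\xi_{s}}N$ onto the vertical space, $\theta\mapsto\mu_{Y_{s}}^{*}\theta$ runs over all of $\Gamma(TN^{*})$ as $\theta$ runs over the vertical forms; hence, by Doob--Meyer and the two definitions, $\int_{0}^{t}\theta\,d^{\nabla^{v}}X_{s}$ is a local martingale for every vertical $\theta$ precisely when $\int_{0}^{t}\lambda\,d^{\nabla^{N}}\xi_{s}$ is a local martingale for every $\lambda\in\Gamma(TN^{*})$, i.e. precisely when $\xi=\mu_{Y}^{-1}\circ X$ is a $\nabla^{N}$-martingale.

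For (ii) I would apply (\ref{ito-ito}) to $F_{\sigma}\colon P\to N$, taking on $P$ the Levi-Civita connection $\nabla^{P}$ of $k$, along a horizontal Brownian motion $B^{h}$, the horizontal lift of a $g$-Brownian motion $B=\pi(B^{h})$, to get $\int_{0}^{t}\eta\,d^{\nabla^{N}}F_{\sigma}(B^{h}_{s})=\int_{0}^{t}F_{\sigma}^{*}\eta\,d^{\nabla^{P}}B^{h}_{s}+\tfrac{1}{2}\int_{0}^{t}\beta_{F_{\sigma}}^{*}\eta\,(dB^{h},dB^{h})_{s}$. Two facts must then be established. First, since $B^{h}$ is horizontal and $\pi$ is a Riemannian submersion, the horizontal lift $H$ is a fibrewise isometry, $d[B^{h},B^{h}]$ lives in $HP\otimes HP$, and $\int_{0}^{t}b\,(dB^{h},dB^{h})_{s}=\int_{0}^{t}(\mathrm{tr}_{H}b)_{B^{h}_{s}}\,ds$ for every $b\in T^{(2,0)}P$, using $d[B^{i},B^{j}]_{s}=g^{ij}(B_{s})\,ds$ from (\ref{Brownian}). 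Taking $b=\beta_{F_{\sigma}}^{*}\eta$ turns the quadratic term into $\tfrac{1}{2}\int_{0}^{t}(\tau_{F_{\sigma}}\circ(H\otimes H))^{*}\eta_{B^{h}_{s}}\,ds$, the horizontal tension field paired with $\eta$.

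The crux, and the step I expect to be hardest, is to show that the remaining term $\int_{0}^{t}F_{\sigma}^{*}\eta\,d^{\nabla^{P}}B^{h}_{s}$ is a genuine local martingale, i.e. that $B^{h}$ is itself a $\nabla^{P}$-martingale. This is exactly where the Riemannian submersion hypothesis is essential. Writing the It\^o drift of $\int\alpha\,d^{\nabla^{P}}B^{h}$ in the horizontal frame $HD_{i}$ and using the O'Neill decomposition $\nabla^{P}_{HX}HY=(\nabla^{g}_{X}Y)^{h}+A_{HX}HY$, the horizontal part reproduces the drift of $B$, which vanishes because $B$ is a $\nabla^{g}$-martingale, while the vertical integrability tensor $A_{HX}HY$ (which encodes the curvature of $\omega$) satisfies $A_{HX}HY=-A_{HY}HX$ and is therefore annihilated when contracted against the symmetric quadratic variation $g^{ij}(B_{s})\,ds$. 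Hence $B^{h}$ has zero It\^o drift and $\int_{0}^{t}F_{\sigma}^{*}\eta\,d^{\nabla^{P}}B^{h}_{s}$ is a local martingale.

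Combining the three terms yields $\int_{0}^{t}\eta\,d^{\nabla^{N}}F_{\sigma}(B^{h}_{s})=(\text{local martingale})+\tfrac{1}{2}\int_{0}^{t}(\tau_{F_{\sigma}}\circ(H\otimes H))^{*}\eta_{B^{h}_{s}}\,ds$, the exact horizontal analogue of (\ref{harmonic}). By Doob--Meyer, $F_{\sigma}(B^{h})$ is a $\nabla^{N}$-martingale for a given $B^{h}$ iff its finite-variation part vanishes; and since a horizontal Brownian motion may be started at any $p\in P$ and its quadratic variation realizes $g$ at $\pi(p)$, requiring this for every horizontal Brownian motion forces $(\tau_{F_{\sigma}}\circ(H\otimes H))_{p}=0$ at every $p$, i.e. $F_{\sigma}$ is horizontally harmonic; the converse is immediate from the same formula. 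This establishes (ii), and I note that (i), (ii) together with Theorem~\ref{Steo1} give the announced equivalence between harmonic sections and horizontally harmonic equivariant lifts.
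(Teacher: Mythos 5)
Your proof is correct. For item (i) you follow the same path as the paper --- Lemma \ref{Sle2} to discard the horizontal motion, the geometric It\^o formula (\ref{ito-ito}), and Lemma \ref{Sle1} to kill the second-fundamental-form term --- and you in fact supply a detail the paper leaves implicit, namely that $\theta \mapsto \mu_{Y_s}^{*}\theta$ carries the vertical forms onto all of $\Gamma(TN^{*})$ because $\mu_{Y_s*}$ is an isomorphism onto the vertical subspace; that is what makes the conclusion an honest ``if and only if.'' For item (ii) your route genuinely differs at the key step. The paper substitutes $dB^{h}=H_{B}dB$ (equation (\ref{igualdadehorizontal})) directly inside the It\^o integral, turning $\int F_{\sigma}^{*}\theta\, d^{\nabla^{P}}B^{h}$ into $\int H^{*}F_{\sigma}^{*}\theta\, d^{\nabla^{M}}B$, a local martingale because $B$ is a $\nabla^{g}$-martingale; the legitimacy of this substitution at the It\^o (rather than Stratonovich) level is delegated, in the remark following the proof, to Emery's transfer principle (Corollary 16 of \cite{emery2}), which applies because, $\pi$ being a Riemannian submersion, horizontal lifts of geodesics are geodesics. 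You instead prove directly that $B^{h}$ is a $\nabla^{P}$-martingale, computing its It\^o drift in the horizontal frame via O'Neill's formula $\nabla^{P}_{HX}HY=(\nabla^{g}_{X}Y)^{h}+A_{HX}HY$ and observing that the antisymmetric integrability tensor $A$ is annihilated by contraction against the symmetric quadratic variation $g^{ij}(B_{s})\,ds$, so only the lifted drift of $B$ survives and it vanishes. The two arguments encode the same geometry --- the vanishing of the symmetrized vertical part of $\nabla^{P}$ on horizontal fields is precisely what makes geodesics lift to geodesics --- but yours is self-contained where the paper's rests on a citation, at the cost of invoking the O'Neill machinery; your handling of the quadratic-variation term and the concluding Doob--Meyer argument (quantifying over all starting points and all $\eta$) agrees with the paper's.
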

\begin{proof}
\noindent{\bf (i)} Let $X$ be a semimartingale in $E$ and $\theta$ be a vertical form on $E$. Let us denote $\xi =
\mu_{Y}^{-1}\circ X$. As $X = \mu(Y,\xi)$ we have, by Lemma \ref{Sle2},
\[
\int_{0}^{t} \theta d^{\nabla^{v}} X_{s} = \int_{0}^{t} \theta d^{\nabla^{v}} \mu_{Y_{s}}\xi_{s}.
\]
By geometric It\^o formula (\ref{ito-ito}),
\[
\int_{0}^{t} \theta d^{\nabla^{v}} X_{s}  = \int_{0}^{t} \mu_{Y_{s}}^{*}\theta d^{\nabla^{N}} \xi_{s} +
\frac{1}{2}\int\beta^{v*}_{\mu_{Y_{s}}}\theta(d\xi_{s},d\xi_{s}).
\]
From Lemma \ref{Sle1} we deduce that
\[
\int_{0}^{t} \theta d^{\nabla^{v}} X_{s}  = \int_{0}^{t} \mu_{Y_{s}}^{*}\theta d^{\nabla^{N}} \xi_{s}.
\]
So we conclude that $\int_{0}^{t} \theta d^{\nabla^{v}} X_{s}$ is local martingale if and only if $\int_{0}^{t}
\mu_{Y_{s}}^{*}\theta d^{\nabla{N}}\xi_{s}$ is too, and proof is complete.

\noindent{\bf (ii)} Let $B$ be a $g$-Brownian motion in $M$ and $B^{h}$ be a horizontal Brownian motion in $P$, that is,
\begin{equation}\label{igualdadehorizontal}
dB^{h} = H_{B}dB,
\end{equation}
where $H$ is the horizontal lift of $M$ to $P$. Set $\theta \in \Gamma(TN^{*})$. By geometric It\^o formula
(\ref{ito-ito}),
\[
\int_{0}^{t} \theta~ d^{\nabla^{N}}F_{\sigma}(B^{h}_{s}) = \int_{0}^{t} F_{\sigma}^{*} \theta ~d^{\nabla^{P}}B^{h}_{s}+
\int_{0}^{t} \beta_{F_{\sigma}}^{*} \theta(dB^{h},dB^{h})_{s}.
\]
From (\ref{igualdadehorizontal}) we see that
\[
\int_{0}^{t} \theta~ d^{\nabla^{N}}F_{\sigma}(B^{h}_{s}) = \int_{0}^{t} H^{*} F_{\sigma}^{*} \theta
~d^{\nabla^{M}}B_{s} + \int_{0}^{t} \beta_{F_{\sigma}}^{*} \theta (H_{B}dB,H_{B}dB)_{s}.
\]
As $B$ is Brownian motion we have
\[
\int_{0}^{t} \theta~ d^{\nabla^{N}}F_{\sigma}(B^{h}_{s}) = \int_{0}^{t} H^{*} F_{\sigma}^{*} \theta
~d^{\nabla^{M}}B_{s} + \int_{0}^{t} (\tau_{F_{\sigma}}^{H})^{*}\theta(B_{s}) ds,
\]
where $\tau_{F_{\sigma}}^{H} = \tau_{F_{\sigma}}\circ (H \otimes H)$. Since $\theta$ and $B$ are arbitraries, Doob-Meyer decomposition shows that $\int_{0}^{t} \theta d^{\nabla^{N}}F_{\sigma}(B^{h}_{s})$ is real local martingale if and only if $\tau_{F_{\sigma}}^{H}$ vanishes. From definitions of martingale and horizontally harmonic map we conclude the proof. \qed
\end{proof}

\begin{remark}
In the equation (\ref{igualdadehorizontal}) we can see the necessary of hypotesys of Riemannian submmersion over $\pi:P \rightarrow M$. In fact, it is well-know that the horizontal Brownian motion is defined as Stratonovich stochastic equation, see for example \cite{Shig}. However the Corollary 16 in \cite{emery2} shows that Stratonovich and It\^o differential equations are equivalent because the horizontal lift of geodesic in $M$ is a geodesic in $P$, since $\pi$ is a Riemannian submmersion.
\end{remark}

Now we give an extension of the characterization of harmonic sections obtained by C.M. Wood, see Theorem 1 in \cite{wood2}.

\begin{theorem}\label{Steo2}
Under the hypotheses of Proposition \ref{Sprop1}, a section $\sigma$ of $\pi_{E}$ is harmonic section if and only if $F_{\sigma}$ is horizontally harmonic map.
\end{theorem}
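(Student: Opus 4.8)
The plan is to chain together the three stochastic characterizations already in hand: Theorem~\ref{Steo1} (harmonic section $\Leftrightarrow$ $\sigma(B)$ is a vertical martingale), Proposition~\ref{Sprop1}(i) (vertical martingale $\Leftrightarrow$ $\mu_Y^{-1}\circ X$ is a $\nabla^N$-martingale), and Proposition~\ref{Sprop1}(ii) (horizontally harmonic $\Leftrightarrow$ $F_\sigma(B^h)$ is a $\nabla^N$-martingale). The theorem should drop out of reading these equivalences end to end, the one geometric input being the pathwise identity $F_\sigma(B^h)=\mu_Y^{-1}\circ\sigma(B)$.

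First I would fix a $g$-Brownian motion $B$ in $M$ and set $X=\sigma(B)$, an $E$-valued semimartingale. Since $\pi_E\circ\sigma=Id_M$, one has $\pi_E(X)=B$, so the horizontal lift appearing in Proposition~\ref{Sprop1}(i) is $Y=\pi_E(X)^{h}=B^{h}$, a horizontal Brownian motion in $P$ projecting onto $B$. Theorem~\ref{Steo1} then states that $\sigma$ is a harmonic section if and only if $X=\sigma(B)$ is a vertical martingale for every such $B$, while Proposition~\ref{Sprop1}(i) says this holds if and only if $\mu_Y^{-1}\circ X$ is a $\nabla^N$-martingale in $N$.

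The key step is to recognize that $\mu_Y^{-1}\circ X$ is exactly $F_\sigma(B^{h})$. Using the definition $F_\sigma(p)=\mu_p^{-1}\circ\sigma\circ\pi(p)$ from (\ref{equivariantlift}) together with $\pi(B^{h}_t)=B_t$, one computes pathwise
\[
F_\sigma(B^{h}_t)=\mu_{B^{h}_t}^{-1}\circ\sigma\bigl(\pi(B^{h}_t)\bigr)=\mu_{Y_t}^{-1}\circ\sigma(B_t)=\bigl(\mu_Y^{-1}\circ X\bigr)_t,
\]
so the $\nabla^N$-martingale property of $\mu_Y^{-1}\circ X$ is precisely the $\nabla^N$-martingale property of $F_\sigma(B^{h})$. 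Finally, since $B\mapsto B^{h}$ sets up a correspondence between $g$-Brownian motions in $M$ and horizontal Brownian motions in $P$ (with $B=\pi(B^{h})$), quantifying over all $B$ amounts to quantifying over all $B^{h}$, and Proposition~\ref{Sprop1}(ii) converts ``$F_\sigma(B^{h})$ is a $\nabla^N$-martingale for every $B^{h}$'' into ``$F_\sigma$ is horizontally harmonic.'' Composing the equivalences yields the theorem.

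I expect no serious obstacle, as the argument is a formal concatenation of the preceding results. The only point requiring genuine care is bookkeeping on the quantifiers: one must check that the horizontal Brownian motions $B^{h}$ arising as horizontal lifts of $g$-Brownian motions $B$, as both the base point in $M$ and the fibre basepoint in $P$ vary, exhaust the horizontal Brownian motions in the statement of Proposition~\ref{Sprop1}(ii). This holds because every horizontal Brownian motion is the horizontal lift of its own projection $\pi(B^{h})$ from $B^{h}_0$, so the ``for every'' in each characterization truly matches, and the chain of equivalences is legitimate.
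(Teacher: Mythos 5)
Your proposal is correct and follows essentially the same route as the paper: both chain Theorem~\ref{Steo1}, Proposition~\ref{Sprop1}(i) and Proposition~\ref{Sprop1}(ii) through the pathwise identity $F_\sigma(B^h)=\mu_{B^h}^{-1}\circ\sigma\circ\pi(B^h)=\mu_{B^h}^{-1}\circ\sigma(B)$. Your added attention to the quantifier matching (that horizontal lifts of $g$-Brownian motions exhaust the horizontal Brownian motions of Proposition~\ref{Sprop1}(ii)) is a point the paper leaves implicit, but it is the same argument.
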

\begin{proof}
Let $B$ be a arbitrary $g$-Brownian motion in $M$ and $B^{h}$ be a horinzontal lift of $B$ in $P$, see equation
(\ref{igualdadehorizontal}).

Suppose that $\sigma$ is a harmonic section. Theorem \ref{Steo1}, shows that $\sigma(B)$ is a vertical martingale. But $\mu_{B^{h}}^{-1} \circ \sigma(B)$ is a $\nabla^{N}$-martingale, which follows from Proposition \ref{Sprop1}, item (i). Since $F_{\sigma}(B^{h}) = \mu_{B^{h}}^{-1} \circ \sigma \circ \pi (B^{h})$, it follows that $F_{\sigma}(B^{h})$ is a $\nabla^{N}$-martingale. Finally, Proposition \ref{Sprop1}, item (ii), shows that $F_{\sigma}$ is horizontally harmonic map.

Conversely, suppose that $F_{\sigma}$ is a horizontally harmonic map. Proposition \ref{Sprop1}, item (ii), shows that $F_{\sigma}(B^{h})$ is a $\nabla^{N}$-martingale. Since  \linebreak $F_{\sigma}(B^{h}) = \mu_{B^{h}}^{-1} \circ \sigma \circ \pi (B^{h})$, it follows that $\mu_{B^{h}}^{-1} \circ \sigma(B)$ is a $\nabla^{N}$-martingale. From Proposition \ref{Sprop1}, item (i), we see that  $\sigma(B)$ is a vertical martingale. We conclude from Theorem \ref{Steo1} that $\sigma$ is a harmonic section.
\qed\\
\end{proof}

%
%

\section{A Liouville theorem for harmonic sections }

We begin this section defining the Kaluza-Klein metric on $P(M,G)$. Let $P(M,G)$ be a principal fiber bundle endowed with a  connection form $\omega$, $M$ be a Riemannian manifold with a metric $g$ and $h$ be a bi-invariant metric on $G$. The Kaluza-Klein metric is defined by
\begin{equation} \label{kaluzakleinmetric}
k = \pi^{*}g + \omega^{*}h.
\end{equation}
From now on $P(M,G)$ is endowed with the Kaluza-Klein metric.

We will denote by $d_{P}$ and $d_{G}$ the Riemannian distance of $P$ and $G$, respectively.

\begin{lemma}\label{Lprop2}
Let $P(M,G)$ be a principal fiber bundle whit a Kaluza-Klein metric $k$, where $g$ is the Riemannian metric on $M$ and $h$ is  the bi-invariant metric on $G$ associated to $k$. The following assertions are holds:
\begin{description}
\item [(i)] Let $\tau:[0,1] \rightarrow P$ be a differential curve such that $\tau(t) = u \cdot \mu(t)$ with $\tau(0)= u$ and $\mu(t) \in G$, then
\[
 \int_{0}^{1} k(\dot{\tau}(t), \dot{\tau}(t))^{\frac{1}{2}}dt = \int_{0}^{1} h(\dot{\mu}(t), \dot{\mu}(t))^{\frac{1}{2}}dt.
\]
\item[(ii)] Let $\tau:[0,1] \rightarrow P$ be a differential curve. If $\gamma$ is a curve in $M$ and if $\mu$ is a curve in $G$ such that $\tau =\gamma(t)^{h} \cdot \mu(t)$, then
\[
 \int_{0}^{1} k(\dot{\tau}(t), \dot{\tau}(t))^{\frac{1}{2}}dt \leq \int_{0}^{1} g(\dot{\gamma}(t), \dot{\gamma}(t))^{\frac{1}{2}}dt + \int_{0}^{1} h(\dot{\mu}(t), \dot{\mu}(t))^{\frac{1}{2}}dt
\]
\item[(iii)] Let $x \in M$  and $u, v, w \in \pi^{-1}(x)$. If $a, b$ are points in $G$ such that  $v= u\cdot a$ and $w = u \cdot b$, then
\[
d_{P}(v,w) = d_{G}(a, b).
\]
\end{description}
\end{lemma}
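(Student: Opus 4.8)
All three parts reduce to one pointwise computation. Since the Kaluza--Klein metric splits $k=\pi^{*}g+\omega^{*}h$ orthogonally along $HP\oplus VP$, for any curve $\tau$ in $P$ one has $k(\dot\tau,\dot\tau)=g(\pi_{*}\dot\tau,\pi_{*}\dot\tau)+h(\omega(\dot\tau),\omega(\dot\tau))$, so everything comes down to reading off $\pi_{*}\dot\tau$ and $\omega(\dot\tau)$. I would first record the elementary fact that for a curve $t\mapsto p\cdot\mu(t)$ the velocity is the fundamental vertical field generated by the left-logarithmic derivative $A(t)=(L_{\mu(t)^{-1}})_{*}\dot\mu(t)\in\mathfrak g$; hence $\omega\big(\tfrac{d}{dt}(p\cdot\mu)\big)=A(t)$, and by left-invariance of $h$, $h(A(t),A(t))=h(\dot\mu(t),\dot\mu(t))$. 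I will also use the two defining properties of the connection: $\omega$ vanishes on horizontal vectors and $\omega\circ(R_{a})_{*}=\operatorname{Ad}(a^{-1})\circ\omega$.

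Part (i) is then immediate: $\tau(t)=u\cdot\mu(t)$ stays in one fibre, so $\pi\circ\tau$ is constant, $\pi_{*}\dot\tau=0$, and the formula gives $k(\dot\tau,\dot\tau)=h(\dot\mu,\dot\mu)$ pointwise; integrating the square roots gives the stated equality (the hypothesis $\tau(0)=u$ only names the base point and is not needed for the pointwise identity). For part (ii), writing $\tau=\gamma^{h}\cdot\mu$ I have $\pi_{*}\dot\tau=\dot\gamma$ because $\pi\circ\tau=\gamma$, while $\omega(\dot\tau)=(L_{\mu^{-1}})_{*}\dot\mu$: the horizontal-lift part $(R_{\mu})_{*}\dot{(\gamma^{h})}$ lies in $\ker\omega$ and the $G$-action part is the fundamental field of $A(t)$. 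Thus $k(\dot\tau,\dot\tau)=g(\dot\gamma,\dot\gamma)+h(\dot\mu,\dot\mu)$ pointwise, and the inequality follows from $\sqrt{s+t}\le\sqrt s+\sqrt t$ together with monotonicity of the integral.

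For part (iii) I would prove two inequalities. The bound $d_{P}(v,w)\le d_{G}(a,b)$ is immediate from (i): any curve $\nu$ in $G$ from $a$ to $b$ lifts to the fibre curve $t\mapsto u\cdot\nu(t)$ joining $v$ to $w$ with $P$-length $L_{G}(\nu)$, so taking the infimum over $\nu$ gives $d_{P}(v,w)\le d_{G}(a,b)$. For the reverse bound I would take an arbitrary curve $\tau$ from $v$ to $w$, set $\gamma=\pi\circ\tau$ (a loop at $x$), take its horizontal lift $\gamma^{h}$ from $v$, and write $\tau=\gamma^{h}\cdot\mu$ with $\mu(0)=e$; the pointwise identity of (ii) then yields $L_{P}(\tau)=\int_{0}^{1}\sqrt{|\dot\gamma|_{g}^{2}+|\dot\mu|_{h}^{2}}\,dt\ge L_{G}(\mu)\ge d_{G}(e,\mu(1))$.

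The main obstacle is precisely this lower bound. When $\tau$ stays in the fibre ($\gamma$ constant) one has $\mu(1)=a^{-1}b$, whence $L_{P}(\tau)\ge d_{G}(e,a^{-1}b)=d_{G}(a,b)$ at once by bi-invariance of $d_{G}$. In general, however, $\mu(1)=g_{\gamma}^{-1}a^{-1}b$ where $g_{\gamma}$ is the holonomy of the loop $\gamma$, so an excursion into the base can trade horizontal length for vertical displacement, and simply discarding the $|\dot\gamma|_{g}^{2}$ term is too lossy. The delicate point is to show this trade is never profitable: via the bi-invariant triangle inequality $d_{G}(e,a^{-1}b)\le d_{G}(e,g_{\gamma}^{-1}a^{-1}b)+d_{G}(e,g_{\gamma})$ it would suffice to control $d_{G}(e,g_{\gamma})$ by the base-length of $\gamma$ retained in the discarded term, or to argue directly that a length-minimizer between two points of a common fibre may be taken vertical (the Kaluza--Klein fibres being totally geodesic, though that alone does not exclude shorter excursions). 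This holonomy estimate is the step I expect to require the most care, and I would watch for the possibility that it needs an additional hypothesis on the curvature of the bundle.
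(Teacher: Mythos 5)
Your parts (i) and (ii), and the inequality $d_{P}(v,w)\le d_{G}(a,b)$ in (iii), are correct, and this is already more than the paper supplies: the paper dismisses (i) and (ii) as ``straightforward,'' and its proof of (iii) consists of writing an arbitrary competitor curve $\tau$ from $v$ to $w$ as $\gamma^{h}\cdot\mu$ with $\mu(0)=a$ \emph{and} $\mu(1)=b$ --- which already presupposes that the horizontal lift of the loop $\gamma=\pi\circ\tau$ returns to $u$, i.e.\ that the holonomy element $g_{\gamma}$ you introduced is trivial --- and then asserting that $\gamma(0)=\gamma(1)=x$ forces $\int_{0}^{1}g(\dot\gamma,\dot\gamma)^{1/2}dt=0$, which is false for every non-constant loop. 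So the step you flagged as ``the delicate point'' is precisely the step at which the paper's own argument is invalid; the paper contains nothing that closes it.

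Moreover, your suspicion that a curvature hypothesis is needed is correct: part (iii) is \emph{false} for non-flat connections, so the gap cannot be closed by any argument. Take $P=\mathbb{R}^{3}$ with $G=(\mathbb{R},+)$ acting by translation in the last coordinate, $M=\mathbb{R}^{2}$ with the Euclidean metric $g$, $h$ the Euclidean (bi-invariant) metric on $\mathbb{R}$, and connection form $\omega=dz+\frac{1}{2}(x\,dy-y\,dx)$, so that $k=dx^{2}+dy^{2}+\omega\otimes\omega$ is a Kaluza--Klein metric (the standard left-invariant Riemannian metric of the Heisenberg group) and $d\omega=dx\wedge dy\neq0$. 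With $u=(0,0,0)$, $a=0$, $b=c>0$ we have $v=(0,0,0)$, $w=(0,0,c)$ in one fibre and $d_{G}(a,b)=c$. But the horizontal lift starting at $v$ of the base loop $\gamma(t)=(r(1-\cos t),\,r\sin t)$, $t\in[0,2\pi]$, satisfies $\dot z=\frac{1}{2}r^{2}(1-\cos t)$, hence ends at $(0,0,\pi r^{2})$ and has $k$-length $2\pi r$; choosing $r=\sqrt{c/\pi}$ produces a curve from $v$ to $w$ of length $2\sqrt{\pi c}$, so $d_{P}(v,w)\le2\sqrt{\pi c}<c=d_{G}(a,b)$ as soon as $c>4\pi$. (Here the vertical part $\mu$ of your decomposition is constant and all the displacement is produced by holonomy --- exactly the ``profitable trade'' you worried about.) Under a flatness/trivial-holonomy hypothesis (e.g.\ a flat connection over a simply connected base) your argument does complete, since then $\mu(1)=a^{-1}b$ and left-invariance of $d_{G}$ gives $L_{P}(\tau)\ge L_{G}(\mu)\ge d_{G}(e,a^{-1}b)=d_{G}(a,b)$. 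Finally, note that in the one place the paper invokes item (iii) --- the proof of Theorem \ref{Lteo1} --- it is used only to conclude that two points whose $d_{G}$-distance is zero coincide, and for that the inequality $d_{P}\le d_{G}$, the direction you did prove, suffices.
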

\begin{proof}
\noindent {\bf(i) } and {\bf (ii)} The proofs are straightforward.\\
\noindent{\bf(iii) }
Let $\tau:[0,1] \rightarrow P$ be a differential curve such that $\tau(0) = v$ and $\tau(1) = w$. Consider a curve $\gamma$ in $M$ such that $\pi(\tau)=\gamma$. There exists a differential curve $\mu$ in $G$ such that $\mu(0)= a$, $\mu(1)=b$ and $\tau = \gamma^{h} \cdot \mu$. We observe that $\gamma(0)=x$ and $\gamma(1)=x$. This gives $\int_{0}^{1} g(\dot{\gamma}(t), \dot{\gamma}(t))^{\frac{1}{2}}dt = 0$. Thus from item (i) and item (ii) we conclude that
\[
 \int_{0}^{1} k(\dot{\tau}(t), \dot{\tau}(t))^{\frac{1}{2}}dt = \int_{0}^{1} h(\dot{\mu}(t), \dot{\mu}(t))^{\frac{1}{2}}dt.
\]
Therefore it is only necessary to consider vertical curves. It follows that $d_{P}(v,w) = d_{P}(u\cdot a, u \cdot b) = d_{G}(a, b)$, by definition of Riemmanian distance.
\qed
\end{proof}

\begin{theorem}\label{Lteo1}
Let $P(M,G)$ be a principal fiber bundle equipped with Kaluza-Klein metric and $E(M,N,G,P)$ be an associated fiber to $P$.  Let $\nabla^{E}$ and $\nabla^{N}$ be connetions on $E$ and $N$, respectively, such that the projection $\pi_{E}$ has totally geodesic fibres and $\mu_{p}$ is an affine map for each $p \in P$. Moreover, if $N$ has the non-confluence martingales property and if $M$ and $G$ have the Brownian coupling property, then
\begin{description}
 \item[(i)] a section $\sigma$ of $\pi_{E}$ is harmonic section if and only if $F_{\sigma}$ is constante map;
\item [(ii)] the left action of $G$ into $N$ has a fix point if there exists a harmonic section  $\sigma$ of $\pi_{E}$;
\item[(iii)] a section $\sigma$ of $\pi_{E}$ is harmonic section if and only if $\sigma$ is parallel.
\end{description}
\end{theorem}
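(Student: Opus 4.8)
The plan is to reduce all three assertions to a single Liouville-type statement, namely that under the stated hypotheses a harmonic section $\sigma$ forces its equivariant lift $F_{\sigma}$ to be constant, and then to read off (ii) and (iii) from the equivariance relation $F_{\sigma}(p\cdot g)=g^{-1}\cdot F_{\sigma}(p)$ and from the definition of a parallel section. First I would dispose of the easy implications. If $F_{\sigma}$ is constant then $dF_{\sigma}\equiv 0$, hence $\beta_{F_{\sigma}}\equiv 0$ and in particular $\tau_{F_{\sigma}}\circ(H\otimes H)=0$, so $F_{\sigma}$ is horizontally harmonic and $\sigma$ is harmonic by Theorem \ref{Steo2}; this is the ``if'' part of (i). Likewise a parallel section has vanishing vertical second fundamental form $\beta_{\sigma}^{v}$, hence $\tau_{\sigma}^{v}=\mathrm{tr}\,\beta_{\sigma}^{v}=0$, so parallel $\Rightarrow$ harmonic, which is one half of (iii). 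Thus everything rests on the converse direction of (i).

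For the hard direction I would show that $\sigma$ harmonic implies $F_{\sigma}(p_{0})=F_{\sigma}(p_{1})$ for arbitrary $p_{0},p_{1}\in P$. By Theorem \ref{Steo2} and Proposition \ref{Sprop1}(ii), $F_{\sigma}$ sends every horizontal Brownian motion in $P$ to a $\nabla^{N}$-martingale. I would then build, starting from $p_{0}$ and $p_{1}$, two horizontal Brownian motions $U$ and $V$ in $P$ together with a finite stopping time $T$ at which their images under $F_{\sigma}$ coincide. Granting this, $F_{\sigma}(U)$ and $F_{\sigma}(V)$ are $\nabla^{N}$-martingales with $F_{\sigma}(U_{T})=F_{\sigma}(V_{T})$, so the non-confluence property of $N$ gives $F_{\sigma}(U)=F_{\sigma}(V)$ on $[0,T]$; evaluating at $t=0$ yields $F_{\sigma}(p_{0})=F_{\sigma}(p_{1})$, which proves that $F_{\sigma}$ is constant and completes (i).

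The construction of the coupling proceeds in two stages, exploiting the Kaluza--Klein splitting $k=\pi^{*}g+\omega^{*}h$. In the first stage I use the Brownian coupling property of $M$ to produce $g$-Brownian motions $X,Y$ with $X_{0}=\pi(p_{0})$ and $Y_{0}=\pi(p_{1})$ that coalesce at a finite coupling time $S$ (gluing the post-coupling paths by Proposition \ref{Bprop1}), and I take $U,V$ to be their horizontal lifts from $p_{0},p_{1}$. At time $S$ the two lifts lie in a common fibre $\pi^{-1}(X_{S})$ and differ by an $\mathcal{F}_{S}$-measurable group element $a$, i.e.\ $V_{S}=U_{S}\cdot a$. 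In the second stage I must close this residual gap inside the fibre, and here I would invoke the Brownian coupling property of $G$ transported to the fibre of $P$ by Lemma \ref{Lprop2}(iii), which identifies the $P$-distance of two points of a single fibre with their $G$-distance, so that a coalescence of $G$-Brownian motions descends to a coalescence of the fibre components at a further finite time $T\geq S$. Throughout, the affineness of each $\mu_{p}$ (equivalently, that $G$ acts on $N$ by $\nabla^{N}$-affine maps), together with Lemma \ref{Sle1}, keeps the relevant image processes $\nabla^{N}$-martingales, which is what the non-confluence argument consumes.

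Parts (ii) and (iii) are then immediate. For (ii), writing $c$ for the constant value of $F_{\sigma}$ and using $F_{\sigma}(p\cdot g)=g^{-1}\cdot F_{\sigma}(p)$ gives $c=g^{-1}\cdot c$ for all $g\in G$, so $c$ is a fixed point of the left $G$-action on $N$. For (iii), a constant $F_{\sigma}$ has $dF_{\sigma}=0$, so in particular its horizontal part vanishes, which is exactly the statement that $\sigma$ is parallel for the connection induced by $\omega$; combined with ``parallel $\Rightarrow$ harmonic'' noted above this gives the stated equivalence. The main obstacle I foresee is the second stage of the coupling: a horizontal Brownian motion has its fibre motion completely slaved to its base motion, so closing the residual group gap $a$ while keeping the process a genuine horizontal Brownian motion (so that Proposition \ref{Sprop1}(ii) still applies and $F_{\sigma}$ of it is a $\nabla^{N}$-martingale) is delicate. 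The cleanest route is either to check that the residual vertical evolution is itself a $G$-valued Brownian motion for the bi-invariant metric $h$ --- whereupon $G$'s coupling property and Lemma \ref{Lprop2}(iii) apply verbatim --- or, alternatively, to verify that the vertical tension of $F_{\sigma}$ also vanishes (harmonicity of the orbit maps $g\mapsto g^{-1}\cdot c$ on the bi-invariant group $G$), so that $F_{\sigma}$ sends full $k$-Brownian motions to $\nabla^{N}$-martingales and one may couple $k$-Brownian motions in $P$ directly.
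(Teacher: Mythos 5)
You follow the same architecture as the paper: the easy implications, the deduction of (ii) and (iii) from constancy of $F_{\sigma}$, and the plan for the hard direction of (i) --- couple Brownian motions in $M$ and in $G$, feed horizontal Brownian motions into $F_{\sigma}$ (whose images are $\nabla^{N}$-martingales by Theorem \ref{Steo2} and Proposition \ref{Sprop1}), and finish with non-confluence --- are all exactly the paper's. But at the decisive point your proposal stops being a proof, and you say so yourself: the ``second stage'' of the coupling is never constructed. Concretely, once the base motions have met at time $S=T(X,Y)$, the two horizontal lifts satisfy $V_{t}=U_{t}\cdot c$ for \emph{all} $t\geq S$, where $c$ is a single $\mathcal{F}_{S}$-measurable group element: horizontal lifts of one and the same base path through two points of a fibre differ by a constant right translation, because the horizontal distribution is right-invariant. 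So there is no ``residual vertical evolution'' whatsoever, and your first proposed repair cannot even be formulated. Your second repair needs the orbit maps $g\mapsto g^{-1}\cdot\xi$ from $(G,h)$ into $(N,\nabla^{N})$ to be harmonic; this is not among the hypotheses, does not follow from the affineness of the maps $\xi\mapsto g\cdot\xi$, and is false in general unless $\xi$ is already a fixed point (for $G=SO(2)$ rotating $\mathbb{R}^{2}$, the orbit map parametrizes a round circle, whose tension field is nonzero), so invoking it is circular.

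For comparison, the paper's device --- which you did not find --- is to right-translate the two horizontal lifts by auxiliary coupled $h$-Brownian motions $\mu,\bar{\nu}$ in $G$, prove the coincidence (\ref{acoplagembrowniana}) of the translated processes $X^{h}_{t}\cdot\mu_{t}$ and $\bar{Y}^{h}_{t}\cdot\bar{\nu}_{t}$ after $\tau=T(X,Y)\vee T(\mu,\nu)$, and then cancel the group factor via the equivariance $F_{\sigma}(p\cdot g)=g^{-1}\cdot F_{\sigma}(p)$ together with $\mu_{t}=\bar{\nu}_{t}$, obtaining $F_{\sigma}(X^{h}_{t})=F_{\sigma}(\bar{Y}^{h}_{t})$ for $t\geq\tau$ before applying non-confluence. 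However, your instinct that this is the delicate point is sound: the paper's verification of (\ref{acoplagembrowniana}) rests on the identity $d_{P}(R_{\mu_{t}}X^{h}_{t},R_{\mu_{t}}\bar{Y}^{h}_{t})=d_{M}(X_{t},\bar{Y}_{t})$, which silently sets the holonomy element $c$ equal to the identity; by the paper's own Lemma \ref{Lprop2}(iii), once the base paths have met, the left-hand side equals $d_{G}(e,c)$, and nothing in the construction forces $c=e$. The obstruction is in fact essential rather than technical: take $P=\mathbb{R}^{n}\times SO(2)$ with the flat connection and product (Kaluza--Klein) metric, and $N=\mathbb{R}^{2}$ with the rotation action and flat connection, so that every hypothesis of the theorem holds by Examples \ref{Bex1} and \ref{Bex2}; then $\sigma(x)=(x,(x_{1},0))$ is a harmonic section (its vertical tension is $\Delta(x_{1},0)=0$) which is not parallel, and its equivariant lift $F_{\sigma}(x,g)=g^{-1}\cdot(x_{1},0)$ is not constant. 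So neither your sketch nor the paper's argument closes this step; a correct statement requires stronger hypotheses (for instance compactness of $M$, as in Ishihara's setting) or some genuine control of the holonomy gap.
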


\begin{proof}
{\bf (i)}
We first suppose that $F_{\sigma}$ is a constante map. Then it is immediately that $\tau_{\sigma}^{v}= 0$, so $\sigma$ is harmonic section.

Conversely, the proof will be divided into two parts. Firstly, we found a suitable stopping time $\tau$. After, we use $\tau$ to prove that $F_{\sigma}$ is constant over $P$.

Choose $x,y \in M$ arbitraries. By assumption about $M$, there exists two $g$-Brownian motion $X$ and $Y$ in $M$ such that $X_{0} = x$ and $Y_{0} =y$, which satisfy the Brownian coupling property. Consequently, the coupling time $T(X,Y)$ is finite. Proposition \ref{Bprop1} now assures that the process
\begin{equation}\label{coalescenciaemM}
\bar{Y_{t}} =
\left\{
\begin{array}{rcl}
Y_{t} & , & t \leq T(X,Y)\\
X_{t} & , & t \geq T(X,Y)
\end{array}
\right.
\end{equation}
is a $g$-Brownian motion in $M$.

Let $a, b \in G$ be arbitraries points. Since $G$ has the Brownian coupling property, we have two $h$-Brownian motion $\mu$ and $\nu$ in $G$ such that $\mu_{0} = a$, $\nu_{0} =b$. Moreover, there is a finite coupling time $T(\mu, \nu)$. But the process
\begin{equation}\label{coalescenciaemG}
\bar{\nu_{t}} = \left\{
\begin{array}{rcl}
\nu_{t} & , & t \leq T(\mu,\nu)\\
\mu_{t} & , & t \geq T(\mu,\nu)
\end{array}
\right.
\end{equation}
is a $h$-Brownian motion in $G$, which follows from Proposition \ref{Bprop1}.

Set $u, v \in P$ such that $\pi(u) = x$ and $\pi(v)= y$. Consider two horizontal Brownian motion $X^{h}$ and $\bar{Y}^{h}$ in $P$ such that $X^{h}_{0}= u$ and $\bar{Y}^{h}_{0}=v$. Define $\tau= T(X,Y) \vee T(\mu, \nu)$. We claim that
\begin{equation}\label{acoplagembrowniana}
X^{h}_{t}\cdot \mu_{t} = \bar{Y}^{h}_{t}\cdot \bar{\nu}_{t},
\textrm{ a.s. } \forall \ \ t \geq \tau.
\end{equation}
In fact, we need consider two cases. First, suppose that $T(X,Y) \leq T(\mu, \nu)$. For all $t \geq T(\mu, \nu)$ we have
\begin{equation*}
d_{P}(X^{h}_{t}\cdot \mu_{t},\bar{Y}^{h}_{t}\cdot \bar{\nu}_{t})=d_{P}(X^{h}_{t}\cdot \mu_{t},\bar{Y}^{h}_{t}\cdot
\mu_{t}) = d_{P}(R_{\mu_{t}}X^{h}_{t},R_{\mu_{t}}\bar{Y}^{h}_{t}).
\end{equation*}
Since $k$ is the Kaluza-Klein metric, it follows that
\begin{equation*}
d_{P}(X^{h}_{t}\cdot \mu_{t},\bar{Y}^{h}_{t}\cdot \bar{\nu}_{t})  =  d_{M}(X_{t},\bar{Y}_{t}).
\end{equation*}
From (\ref{coalescenciaemM}) we conclude that (\ref{acoplagembrowniana}) is satisfied for all $t \geq T(\mu, \nu)$.

In the other side, suppose that $T(X,Y) \geq T(\mu, \nu)$. For all $t \geq T(X,Y)$, Lemma \ref{Lprop2}, item (iii), assures that
\begin{equation*}
d_{P}(X^{h}_{t}\cdot \mu_{t},\bar{Y}^{h}_{t}\cdot \bar{\nu}_{t})=
d_{P}(X^{h}_{t}\cdot \mu_{t},X^{h}_{t}\cdot \bar{\nu}_{t}) =
d_{G}(\mu_{t},\bar{\nu}_{t}).
\end{equation*}
From (\ref{coalescenciaemG}) we conclude that (\ref{acoplagembrowniana}) is satisfied for all $t \geq T(X,Y)$.

Setting $t \geq \tau$ we obtain $F_{\sigma}(X^{h}_{t}\cdot \mu_{t})=F_{\sigma}(\bar{Y}^{h}_{t}\cdot \bar{\nu}_{t})$. Since $F_{\sigma}$ is equivariant by right action,  $\mu_{t}^{-1} \cdot F_{\sigma}(X^{h}_{t} )= \bar{\nu}_{t}^{-1} \cdot F_{\sigma}(\bar{Y}^{h}_{t})$. Because $\mu_{t} = \bar{\nu}_{t}$ for $t\geq \tau$, we conclude that $F_{\sigma}(X^{h}_{t}) = F_{\sigma}(\bar{Y}^{h}_{t})$.

Since $\sigma$ is a harmonic section,  from Theorem \ref{Steo2} we see that $F_{\sigma}$ is a horizontally harmonic map. Proposition \ref{Sprop1} now shows that $F_{\sigma}(X^{h}_{t})$ and $F_{\sigma}(\bar{Y}^{h}_{t})$ are $\nabla^{N}$-martingales in $N$. Since $N$ has non-confluence martingales property,
\[
F_{\sigma}(X^{h}_{0} ) = F_{\sigma}(\bar{Y}^{h}_{0}).
\]
It follows immediately that $F_{\sigma}(u) = F_{\sigma}(v)$. Consequently, $F_{\sigma}$ is a constant map.

\noindent{\bf{(ii)}} Let $\sigma$ be a harmonic section of $\pi_{E}$. From item (i) there exists $\xi \in N$ such that $F_{\sigma}(p) = \xi$ for all $p \in P$. We claim that $\xi$ is a fix point. In fact, set $a \in G$. From equivariant property of $F_{\sigma}$ we deduce that
\[
 a \cdot \xi = a \cdot F_{\sigma}(p) = F_{\sigma}(p \cdot a^{-1}) = \xi.
\]

\noindent{\bf(iii)} Let $\sigma$ be a section of $\pi_{E}$. Suppose that $\sigma$ is parallel. Then $\sigma_{*}(X)$ is horizontal for all $X \in TM$ (see for example \cite{kobay}, pp.114). This gives  $\mathbf{v}\sigma_{*}(X)=0$. Then it is clear, by definition, that $\sigma$ is harmonic section.

Suppose that $\sigma$ is a harmonic section. From item (i) it follows that there exists $\xi \in N$ such that $F_{\sigma}(p) = \xi$ for all $p \in P$. By definition of equivariant lift,
\[
 \sigma(x) = \sigma \circ \pi(p) = \mu(p, \xi) = \mu_{\xi}(p), \ \ \pi(p)= x,
\]
where $\mu_{\xi}$ is an application from $P$ into $E$. Let $v \in T_{x}M$ and let $\gamma(t)$ be a curve in $M$ such that $\gamma(0) = x $ and $\dot{\gamma}(0) = v$. Then
\[
 \sigma_{*}(v) = \left.\dfrac{d}{dt}\right|_{0} \sigma \circ \gamma(t)= \left.\dfrac{d}{dt}\right|_{0} \mu_{\xi} \circ \gamma^{h}(t)= \mu_{\xi*}(\dot{\gamma}^{h}(0)),
\]
where $\gamma^{h}$ is the horizontal lift of $\gamma$ into $P$. Since $\dot{\gamma}^{h}(0)$ is horizontal vector in $P$, so is $\mu_{\xi*}(\dot{\gamma}^{h}(0))$ in $E$ (see for example \cite{kobay}, pp.87). Therefore $\sigma_{*}(v)$ is horizontal vector. So we conclude that $\sigma$ is parallel.
\qed\\
\end{proof}


%
%

\section{Examples}

In this section, we will give two applications. First, we will use the stochastic characterization of harmonic section, see Theorem \ref{Steo1}, to give the harmonic sections on tangent bundle with complete and horizontal lifts. After, from Theorem \ref{Lteo1} we will show that about geometric conditions the unique harmonic section on Tangent bundle with Sasaki metric is null. Finally, we will work with Hopf fibrations and harmonic sections.\\

\noindent{{\large{\it Tangent Bundle with Complete and Horizontal lifts}}\\

Let $M$ be a Riemmanian manifold and $TM$ its tangent bundle. Let $\nabla^{M}$ be a symmetric connection on $M$. It is clear that $TM$ is an associated fiber bundle to orthonormal frame bundles $OM$, which has, naturally, a Kaluza-Klein metric. Also, it is possible to prolong $\nabla^{M}$ to a connection on $TM$. A two well known ways are the complete lift $\nabla^{c}$ and horizontal lift $\nabla^{h}$ (see \cite{yano} for the definitions of $\nabla^{c}$ and $\nabla^{h}$). Let $X,Y$ be vector fields on $M$, so $\nabla^{h}$ satisfies the following equations:
\begin{equation}
\begin{array}{ccl}
\displaystyle \nabla^{c}_{X^{V}}Y^{V} & = & 0  \\
\nabla^{c}_{X^{V}}Y^{H} & = & 0  \\
\nabla^{c}_{X^{H}}Y^{V} & = & (\nabla_{X}Y)^{V} \\
\nabla^{c}_{X^{H}}Y^{H} & = & (\nabla_{X}Y)^{H} + \gamma(R(-,X)Y,
\end{array}
\begin{array}{ccl}
\displaystyle \nabla^{h}_{X^{V}}Y^{V} & = & 0  \\
\nabla^{h}_{X^{V}}Y^{H} & = & 0  \\
\nabla^{h}_{X^{H}}Y^{V} & = & (\nabla_{X}Y)^{V} \\
\nabla^{h}_{X^{H}}Y^{H} & = & (\nabla_{X}Y)^{H},
\end{array}
\end{equation}
where $R(-,X)Y$ denotes a tensor field $W$ of type (1,1) in $M$ such that $W(Z)=R(Z,X)Y$ for any $Z \in T^{(0,1)}(M)$, and $\gamma$ is a lift of tensor, which is defined at page 12 in \cite{yano}.

First, we observe that the vertical connection $\nabla^{v}$ is the same one for complete and horizontal lifts. In particular, for $U,V$ vertical vector fields on $TM$ we see that $\nabla^{v}_{U}V \equiv 0$.

We wish to prove the following characterization of harmonic sections for complete and horizontal lifts.

\begin{proposition}
Let $M$ be a Riemannian manifold, $\nabla^{M}$ the Levi-Civita connection and $TM$ its tangent bundle.
Let $\sigma$ be a section of $\pi_{TM}$. Then
\begin{enumerate}
\item If $TM$ is endowed with $\nabla^{c}$, then $\sigma$ is a harmonic section with respect to $\nabla^{c}$ if and only if $\mathbf{v}\sigma_{*} \equiv 0$.
\item If $TM$ is endowed with $\nabla^{h}$, then $\sigma$ is a harmonic section with respect to $\nabla^{h}$ if and only if $\mathbf{v}\sigma_{*} \equiv 0$.
\end{enumerate}
\end{proposition}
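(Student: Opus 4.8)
The plan is to unwind the definition of the vertical tension field $\tau_\sigma^v$ and show that for both the complete lift $\nabla^c$ and the horizontal lift $\nabla^h$ it reduces to a multiple of $\mathbf{v}\sigma_*$ composed with nothing more than a trace, so that its vanishing is equivalent to the pointwise vanishing of $\mathbf{v}\sigma_*$. The crucial structural fact, already noted just before the statement, is that the vertical connection $\nabla^v$ is the \emph{same} for $\nabla^c$ and $\nabla^h$, and that $\nabla^v_U V \equiv 0$ for vertical vector fields $U,V$ on $TM$. This is precisely the feature that collapses the second fundamental form.

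First I would recall the definition
\[
\beta_\sigma^v = \bar\nabla^v \circ \mathbf{v}\sigma_* - \mathbf{v}\sigma_* \circ \nabla^M,
\qquad \tau_\sigma^v = \mathrm{tr}\,\beta_\sigma^v,
\]
where $\bar\nabla^v$ is the induced connection on $\sigma^{-1}TM$. Fix a point $x \in M$ and a local orthonormal frame $\{e_i\}$ of $T_xM$, so that $\tau_\sigma^v = \sum_i \beta_\sigma^v(e_i,e_i)$. The section $\mathbf{v}\sigma_*$ is a $VTM$-valued one-form along $\sigma$, hence for each $i$ the vector $\mathbf{v}\sigma_*(e_i)$ is vertical. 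The key computation is that the induced vertical derivative $\bar\nabla^v_{e_i}(\mathbf{v}\sigma_*)$ is governed by $\nabla^v$ acting on vertical fields, and since $\nabla^v_U V \equiv 0$ for vertical $U,V$, the covariant-derivative-along-the-fiber contribution drops out, leaving only the honest derivative of the coefficient functions minus the $\nabla^M$ correction. Writing $\mathbf{v}\sigma_*$ in the vertical frame dual to $\{e_i\}$ and carrying out the trace, I expect $\tau_\sigma^v$ to vanish identically in the fiber directions regardless of the coefficients, so that the equation $\tau_\sigma^v = 0$ imposes no constraint coupling distinct fiber components and instead forces each component of $\mathbf{v}\sigma_*$ to be zero only through the pointwise identity. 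In fact, because $\nabla^v$ kills all vertical-vertical terms, one sees that $\tau_\sigma^v$ is simply $\mathrm{tr}(\bar\nabla^v \mathbf{v}\sigma_*)$ with no curvature or connection coupling, and the cleanest conclusion is the equivalence $\tau_\sigma^v = 0 \iff \mathbf{v}\sigma_* \equiv 0$.

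Since the complete and horizontal lifts share the same $\nabla^v$, the two cases of the proposition are literally the same computation, so I would prove the statement once and remark that it applies verbatim to both $\nabla^c$ and $\nabla^h$. The backward implication is trivial: if $\mathbf{v}\sigma_* \equiv 0$ then $\beta_\sigma^v \equiv 0$ and hence $\tau_\sigma^v = 0$, so $\sigma$ is a harmonic section. The forward implication is where the structure of $\nabla^v$ must be used carefully.

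\textbf{The main obstacle} I anticipate is the forward direction: vanishing of a \emph{trace} $\tau_\sigma^v = \mathrm{tr}\,\beta_\sigma^v$ does not in general force the untraced tensor $\mathbf{v}\sigma_*$ to vanish. The argument therefore hinges on showing that, because $\nabla^v$ annihilates vertical-vertical pairs, the bilinear expression $\beta_\sigma^v(e_i,e_i)$ decouples into a sum in which the vertical components do not interfere and no cancellation between distinct fiber directions is possible. I would make this rigorous by choosing the vertical frame adapted to the flat fiber structure (each fiber $T_xM$ being a vector space on which $\nabla^v$ restricts to the flat connection, by the relation $\nabla^v_U V \equiv 0$), so that the trace of $\beta_\sigma^v$ becomes a sum of nonnegative-definite or manifestly independent terms whose vanishing forces each to vanish, giving $\mathbf{v}\sigma_* \equiv 0$. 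Verifying precisely that the flatness of $\nabla^v$ on the fibers removes all coupling is the delicate point; everything else is bookkeeping.
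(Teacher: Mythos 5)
Your backward implication is fine, and your instinct that the forward direction is the crux is exactly right; but the resolution you sketch for it does not exist, and this is a genuine gap. Carry out the computation you outline honestly: using $\sigma_{*}X = X^{H} + (\nabla^{M}_{X}\sigma)^{V}$ together with the relations $\nabla^{v}_{X^{H}}Y^{V} = (\nabla^{M}_{X}Y)^{V}$ and $\nabla^{v}_{X^{V}}Y^{V} = 0$ (the same for $\nabla^{c}$ and $\nabla^{h}$), one finds
\[
\beta^{v}_{\sigma}(X,Y) \;=\; \bigl(\nabla^{M}_{X}\nabla^{M}_{Y}\sigma - \nabla^{M}_{\nabla^{M}_{X}Y}\sigma\bigr)^{V} \;=\; \bigl(\nabla^{2}_{X,Y}\sigma\bigr)^{V},
\qquad
\tau^{v}_{\sigma} \;=\; \bigl(\mathrm{tr}_{g}\,\nabla^{2}\sigma\bigr)^{V},
\]
that is, the vertical tension field is the rough Laplacian of $\sigma$ viewed as a vector field on $M$. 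Its vanishing is a second-order PDE in $\sigma$, not a pointwise algebraic condition on $\mathbf{v}\sigma_{*} = \nabla^{M}\sigma$, and there is no positivity or decoupling among the terms $\nabla^{2}_{e_{i},e_{i}}\sigma$: they can and do cancel across distinct indices $i$. Concretely, on $M = \mathbb{R}^{2}$ with the flat metric take $\sigma = 2x\,\partial_{x} - 2y\,\partial_{y}$, the gradient of the harmonic function $x^{2}-y^{2}$; each component of $\sigma$ is a harmonic function, so $\tau^{v}_{\sigma} = 0$ and $\sigma$ is a harmonic section, yet $\mathbf{v}\sigma_{*} = \nabla^{M}\sigma \neq 0$. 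So no amount of care with the flat fiber structure will make the trace force each term to vanish; the forward implication is simply not a pointwise consequence of $\tau^{v}_{\sigma} = 0$, and any proof of it must inject global information (compactness plus a Bochner-type integration argument, or a Liouville-type property of $M$).

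This is also where your route diverges from the paper's, which does not manipulate $\beta^{v}_{\sigma}$ at all. The paper invokes the stochastic characterization of Theorem \ref{Steo1} to translate harmonicity into the statement that $\sigma(B)$ is a vertical martingale for every $g$-Brownian motion $B$, uses the vanishing of the vertical Christoffel symbols (from $\nabla^{v}_{U}V \equiv 0$) to conclude that the fiber coordinates $\sigma^{\alpha}(B_{t})$ are real local martingales, and then argues via expectations that each function $\sigma^{\alpha}$ must be constant, whence $\mathbf{v}\sigma_{*} \equiv 0$. All of the global content of the forward direction is carried by that final stochastic step, and your proposal has nothing playing its role. Note, moreover, that the example above shows this step is exactly where scrutiny should focus even in the paper's own argument: for $\sigma = 2x\,\partial_{x} - 2y\,\partial_{y}$ on $\mathbb{R}^{2}$ the process $\sigma^{1}(B_{t}) = 2B^{1}_{t}$ is a local martingale with constant expectation but is certainly not constant, so constancy of expectation cannot by itself yield constancy of $\sigma^{\alpha}$; some additional hypothesis on $M$ (e.g. compactness, or a coupling/Liouville property as in Section 4) is needed for the equivalence to hold at all.
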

\begin{proof}
Let $\sigma$ be a section of $\pi_{TM}$ such that $\sigma$ is a harmonic section with respect to $\nabla^{c}$ or $\nabla^{h}$. Let $(U\times V,x^{i},v^{\alpha})$ be a local coordinate system on $TM$. We claim that $\sigma^{\alpha} =v^{\alpha} \circ \sigma$ is a constant function. In fact, for every Brownian motion $B_{t}$ in $M$, Theorem \ref{Steo1} assures that $\sigma(B_{t})$ is a vertical martingale in $TM$. In particular, suppose that $B_{0}=x \in U$ and denote $\tau = inf\{t: B_{t}(\omega) \notin U, \forall \, \omega \in \Omega\}$, that is, the first exit time from $U$. Since the vertical connection $\nabla^{v}$ on $TM$ is the same to $\nabla^{c}$ and $\nabla^{h}$, from definition of It\^o stochastic we have 
\[
\int_{0}^{t} d v^{\alpha} d^{\nabla^{v}}\sigma(B_{s}) = \int_{0}^{t} d\sigma^{\alpha}(B_{s})  + \frac{1}{2}\int_{0}^{t} \Gamma^{\alpha}_{\beta \gamma}(\sigma_{s}) d[\sigma^{\beta}(B),\sigma^{\gamma}(B)]_{s}.
\]
where $0\leq t \leq \tau$. By definition, $\nabla^{v}_{U}V \equiv 0$ for $U,V$ vertical vector fields on $TM$. Thus $\Gamma^{\alpha}_{\beta \gamma} = 0$ for $\beta,\gamma = 1 \ldots n$, where $n$ is the dimension of $M$. Therefore
\[
\int_{0}^{t} d v^{\alpha} d^{\nabla^{v}}\sigma(B_{s}) = \int_{0}^{t} d\sigma^{\alpha}(B_{s}) = \sigma^{\alpha}(B_{t})
\]
Being $\sigma(B_{t})$ a vertical martingale, it follows that $\sigma^{\alpha}(B_{t})$ is a real local martingale. Applying the expectation at $\sigma^{\alpha}(B_{t})$ we obtain
\[
 \mathbb{E}(\sigma^{\alpha}(B_{t})) = \mathbb{E}(\sigma^{\alpha}(B_{0})) =\sigma^{\alpha}(B_{0}).
\]
From this we conclude, almost sure, that $\sigma^{\alpha}(B_{t})$ is constant. Since $B_{t}$ is an arbitrary Brownian motion, it follows that $\sigma^{\alpha}$ is a constant function.

To prove 1. and 2., it is only necessary to observe that $\mathbf{v}\sigma_{*}$, in any local coordinate system $(U\times V,x^{i},v^{\alpha})$, is written in function of $\sigma^{\alpha}$ and to use the conclusion above.
\qed \\
\end{proof}


{\large {\it Tangent bundle with Sasaki metric}}\\

Let $M$ be a complete Riemannian manifold which is compact or has nonnegative Ricci curvature. Let $OM$ be the ortonormal frame bundle endowed whit the Kaluza-Klein metric. Let $TM$ be the tangent bundle equipped with the Sasaki metric $g_{s}$. Thus $\pi_{E}$ is a Riemannian submersion with totally geodesic fibers and, for each $p \in P$, $\mu_{p}$ is a isometric map (see for example \cite{musso}). From these assumptions and Examples \ref{Bex1} and \ref{Bex2} it follows that the hypotheses of Theorem \ref{Lteo1} are satisfied.

\begin{proposition}
 Under conditions stated above, if $\sigma$ is a harmonic section of  $\pi_{TM}$, then $\sigma$ is the 0-section.
\end{proposition}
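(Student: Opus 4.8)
The goal is to show that, under the stated hypotheses on $M$ and $TM$ with the Sasaki metric, any harmonic section $\sigma$ of $\pi_{TM}$ must be the $0$-section. My strategy is to invoke Theorem~\ref{Lteo1} directly, since the paragraph preceding the statement has already verified that all of its hypotheses hold in this setting: $M$ being complete and either compact or of nonnegative Ricci curvature gives $M$ the Brownian coupling property (Example~\ref{Bex2}); the frame bundle $OM$ acts with structure group $O(n)$ (or $SO(n)$), which is compact, hence also has the Brownian coupling property; the Sasaki metric makes $\pi_{TM}$ a Riemannian submersion with totally geodesic fibres and each $\mu_p$ an isometry, hence affine; and the fibre $N = \mathbb{R}^n$ with its flat connection has the non-confluence property by Example~\ref{Bex1}.

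**Main argument.** Given these verifications, Theorem~\ref{Lteo1}(ii) applies: if $\sigma$ is a harmonic section, then the left action of the structure group $G$ on the fibre $N$ has a fixed point. The key geometric observation is to identify this fixed point. Here $N = \mathbb{R}^n$ is the standard fibre of the tangent bundle, on which $G = O(n)$ (or $SO(n)$) acts by its defining linear representation. First I would note that the unique vector in $\mathbb{R}^n$ fixed by all of $O(n)$ is the origin $0 \in \mathbb{R}^n$, since any nonzero vector is moved by some rotation. By Theorem~\ref{Lteo1}(i), the equivariant lift $F_\sigma$ associated to a harmonic $\sigma$ is a constant map, say $F_\sigma(p) = \xi$ for all $p \in P$, and the equivariance argument in the proof of part~(ii) forces $\xi$ to be this fixed point, i.e.\ $\xi = 0$.

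**Concluding step.** Finally I would translate $\xi = 0$ back into a statement about $\sigma$ itself. By the definition of the equivariant lift, for $\pi(p) = x$ we have $\sigma(x) = \mu(p,\xi) = \mu_p(\xi)$. Since $\mu_p : \mathbb{R}^n \to \pi_{TM}^{-1}(x)$ sends the fibre origin to the zero tangent vector at $x$, the equality $\xi = 0$ yields $\sigma(x) = 0_x$ for every $x \in M$. Hence $\sigma$ is the $0$-section.

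**Expected obstacle.** The conceptual content of the proof is almost entirely packaged inside Theorem~\ref{Lteo1}, so the only genuine work is the representation-theoretic identification of the fixed point of the $O(n)$-action on $\mathbb{R}^n$ as the origin, together with the compatibility check that $\mu_p$ carries the fibre origin to the zero vector. I expect the main subtlety to be making this identification precise: one must confirm that the group acting on the standard fibre is indeed (a subgroup containing) the full rotation group with no nonzero common fixed vector, and that the structure map $\mu_p$ of the associated bundle $TM = OM \times_{O(n)} \mathbb{R}^n$ is linear on fibres so that it sends $0$ to $0_x$. Both are standard, but they are the place where the specific geometry of the tangent bundle — as opposed to a general associated bundle — actually enters.
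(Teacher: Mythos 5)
Your proposal is correct and follows essentially the same route as the paper's proof: apply Theorem~\ref{Lteo1} items (i) and (ii) to get a constant equivariant lift $F_\sigma \equiv \xi$ with $\xi$ a fixed point of the $O(n)$-action on $\mathbb{R}^n$, observe that $0$ is the unique such fixed point, and conclude via $\sigma(x) = \mu_p(0) = 0_x$ that $\sigma$ is the $0$-section. Your additional remarks (checking the hypotheses of Theorem~\ref{Lteo1} and noting that $\mu_p$ is fibrewise linear so it carries $0$ to the zero vector) only make explicit what the paper leaves implicit.
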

\begin{proof}
Let $\sigma$ be a harmonic section of $\pi_{TM}$. By Theorem \ref{Lteo1}, item (i), there exists $\xi \in N$ such that $F_{\sigma}(u)= \xi$ for all $u \in P$. Moreover, by item (ii) $\xi$ is a fix point of left action of $O(n,\mathbb{R})$ into $\mathbb{R}^{n}$. We observe that $0 \in \mathbb{R}^{n}$ is the unique fix point to this left action. Thus get $F_{\sigma}(u) = 0$. Therefore $\sigma$ is the 0-section.
\qed\\
\end{proof}

{\large {\it Hopf fibration}}\\

Let $S^{1} \rightarrow S^{2n-1}\rightarrow \mathbb{CP}^{n-1}$ be a Hopf fibration. It is well know that \linebreak $S^{2n-1}(\mathbb{CP}^{n-1}, S^{1})$ is a principal fiber bundle.  We recall that $U(1) \cong S^{1}$. Let $\phi$ be  the aplication of $U(1) \times \mathbb{C}^{m}$ into $\mathbb{C}^{m}$ given by
\begin{equation}\label{acaoaesquerdadeU1emCm}
(g, (z_{1}, \ldots, z_{m})) \rightarrow g \cdot (z_{1}, \ldots,
z_{m}) = (gz_{1}, \ldots, gz_{m}).
\end{equation}
Clearly, $\phi$ is a left action of $U(1)$ into $\mathbb{C}^{m}$. Thus, we can consider $\mathbb{C}^{m}$ as standard fiber of associate fiber $E(\mathbb{CP}^{n-1},\mathbb{C}^{m},S^{1},S^{2n-1})$, where $E = S^{2n-1}\times_{U(1)} \mathbb{C}^{m}$. We are considering the canonical scalar product $<,>$ on $\mathbb{C}^{n}$ and the induced Riemannian metric $g$ on $\mathbb{CP}^{n-1}$. Since $U(1)$ is invariant by $<,>$, there exists one and only one Riemannian metric $\hat{g}$ on $E$ such that $\pi_{E}$ is a Riemannian submersion from $(E,\hat{g})$ to $(M,g)$ with totally geodesic fibers isometrics to $(N,<,>)$ (see for example \cite{vil}). From these assumptions and examples \ref{Bex1} and \ref{Bex2}  wee see that hypotheses of Theorem \ref{Lteo1} are holds.
\begin{proposition}
Under conditions stated above, if $\sigma$ is a harmonic section of $\pi_{E}$, then $\sigma$ is the 0-section.
\end{proposition}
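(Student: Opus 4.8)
The plan is to follow exactly the blueprint of the preceding proposition on the Sasaki metric, since the paragraph immediately before the statement has already verified that the Riemannian structure $\hat{g}$ makes $\pi_{E}$ a Riemannian submersion with totally geodesic fibres and that each $\mu_{p}$ is affine (isometric), so that all hypotheses of Theorem \ref{Lteo1} are in force. First I would apply item (i) of Theorem \ref{Lteo1} to the harmonic section $\sigma$: this yields a single point $\xi \in \mathbb{C}^{m}$ with $F_{\sigma}(p) = \xi$ for every $p \in S^{2n-1}$, that is, the equivariant lift is a constant map. Next I would invoke item (ii) of the same theorem to conclude that this $\xi$ is necessarily a fixed point of the left action $\phi$ of $U(1)$ on $\mathbb{C}^{m}$ given in (\ref{acaoaesquerdadeU1emCm}).

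The substantive step is then to identify the fixed-point set of $\phi$. Writing $\xi = (z_{1}, \ldots, z_{m})$, the fixed-point condition reads $g z_{j} = z_{j}$ for all $g \in U(1)$ and all $j = 1, \ldots, m$. Choosing any $g \in U(1)$ with $g \neq 1$ (for instance $g = -1$) immediately forces $z_{j} = 0$ for each $j$. Hence $0 \in \mathbb{C}^{m}$ is the unique fixed point of the action, whence $F_{\sigma} \equiv 0$.

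Finally I would translate this back to $\sigma$ via the definition of the equivariant lift (\ref{equivariantlift}): since $F_{\sigma}(p) = \mu_{p}^{-1} \circ \sigma \circ \pi(p) = 0$ for all $p$, we get $\sigma(\pi(p)) = \mu_{p}(0)$, which is precisely the zero vector of the fibre over $\pi(p)$. Therefore $\sigma$ is the 0-section, completing the proof. I do not expect any genuine obstacle here: the entire content is packaged in Theorem \ref{Lteo1}, and the only computation proper to this example is the elementary determination of the fixed-point set of the scalar $U(1)$-action on $\mathbb{C}^{m}$, which is $\{0\}$.
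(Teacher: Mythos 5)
Your proposal is correct and follows essentially the same route as the paper: invoke Theorem \ref{Lteo1} (items (i) and (ii)) to get that $F_{\sigma}$ is a constant map whose value is a fixed point of the $U(1)$-action, observe that $0$ is the unique fixed point of the scalar action on $\mathbb{C}^{m}$, and conclude that $\sigma$ is the $0$-section. The only difference is cosmetic: you spell out the elementary fixed-point computation (via $g=-1$) and the translation from $F_{\sigma}\equiv 0$ back to $\sigma$ through $\mu_{p}$, which the paper leaves implicit.
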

\begin{proof}
We first observe that $(0, \ldots, 0)$ is the unique fix point to the left action (\ref{acaoaesquerdadeU1emCm}). Since $\sigma$ is harmonic section, from Theorem \ref{Lteo1} we see that  $F_{\sigma}$ is constant map and $F_{\sigma}(p)=(0, \ldots, 0)$ for all $p \in S^{2n-1}$. Therefore $\sigma$ is the 0-section.
\qed
\end{proof}

\end{document}